\newcommand{\A}{{\mathcal A}}
\newcommand{\Ap}[1][]{A_p #1}
\newcommand{\Bp}[1][]{B_p #1}
\newcommand{\ds}[1]{\displaystyle #1}
\newcommand{\dualp}[3][]{\left(#2,#3\right)_{#1}}
\newcommand{\eps}{\varepsilon}
\newcommand{\F}{{\mathcal F}}
\newcommand{\incl}{\hookrightarrow}
\newcommand{\N}{\mathbb N}
\newcommand{\norm}[2][]{\left\|#2\right\|_{#1}}
\renewcommand{\o}{\text{o}}
\newcommand{\PS}[1]{$(\text{PS})_{#1}$}
\newcommand{\pnorm}[2][]{\if #1'' \left|#2\right|_p \else \left|#2\right|_{#1} \fi}
\newcommand{\R}{\mathbb R}
\newcommand{\RP}{\R \text{P}}
\newcommand{\seq}[1]{\left(#1\right)}
\newcommand{\set}[1]{\left\{#1\right\}}
\newcommand{\wto}{\rightharpoonup}
\newcommand{\Z}{\mathbb Z}
\newenvironment{enumroman}{\begin{enumerate}

}{\end{enumerate}}
\newtheorem{corollary}{Corollary}[section]
\newtheorem{lemma}[corollary]{Lemma}
\newtheorem{proposition}[corollary]{Proposition}
\newtheorem{theorem}[corollary]{Theorem}
\theoremstyle{remark}
\newtheorem{example}[corollary]{Example}
\numberwithin{equation}{section}
\title[Nonlocal critical elliptic equations]{Nonlocal critical elliptic equations in homogeneous fractional Sobolev spaces}
\author[S. Carl]{Siegfried Carl}
\address[S. Carl]{Institut f\"ur Mathematik, Martin-Luther-Universit\"at Halle-Wittenberg,
D-06099 Halle, Germany}
\email{\tt siegfried.carl@mathematik.uni-halle.de}
\author[K. Perera]{Kanishka Perera}
\address[K. Perera]{Department of Mathematical Sciences, Florida Institute of Technology,
150 W University Blvd, Melbourne, FL 32901, USA}
\email{\\ kperera@fit.edu}
\author[H. Tehrani]{Hossein Tehrani}
\address[H. Tehrani]{Department of Mathematical Sciences, University of Nevada Las Vegas Box 454020, USA}
\email{\tt tehranih@unlv.nevada.edu}
\date{}
\thanks{{\em MSC2010:} Primary 35R11, Secondary 35B33, 35A15
\newline \indent {\em Key Words and Phrases:} nonlocal critical elliptic equations, homogeneous fractional Sobolev spaces, multiplicity results, variational methods}
\begin{document}

\begin{abstract}
We prove new multiplicity results for some nonlocal critical growth elliptic equations in homogeneous fractional Sobolev spaces. The proofs are based on an abstract critical point theorem based on the $\Z_2$-cohomological index and on a novel regularity result for fractional $p$-Laplacian equations as well as on some compact embeddings.
\end{abstract}

\maketitle

\begin{center}
\begin{minipage}{12cm}
\tableofcontents
\end{minipage}
\end{center}

\section{Introduction}

In this paper we prove new multiplicity results for the critical fractional $p$-Laplacian equation
\begin{equation} \label{1.1}
(- \Delta_p)^s\, u = \lambda\, a(x)\, |u|^{p - 2}\, u + \mu\, b(x)\, |u|^{q - 2}\, u + |u|^{p_s^\ast - 2}\, u \quad \text{in } \R^N,
\end{equation}
where $(- \Delta_p)^s$ is the fractional $p$-Laplacian operator defined on smooth functions by
\[
(- \Delta_p)^s\, u(x) = 2 \lim_{\eps \searrow 0} \int_{\R^N \setminus B_\eps(x)} \frac{|u(x) - u(y)|^{p-2}\, (u(x) - u(y))}{|x - y|^{N+sp}}\, dy, \quad x \in \R^N,
\]
$s \in (0,1)$, $1 < p < N/s$, $p_s^\ast = Np/(N - sp)$ is the fractional critical Sobolev exponent, $p < q < p_s^\ast$, $a, b : \R^N \to \R$ are measurable functions that are not identically zero and satisfy
\begin{equation} \label{1.2}
0 \le a(x) \le \frac{c_a}{1 + |x|^{N + \alpha}}, \quad 0 \le b(x) \le \frac{c_b}{1 + |x|^{N + \beta}} \quad \text{for a.a.\! } x \in \R^N
\end{equation}
for some $\alpha, \beta > 0$ and $c_a, c_b \ge 0$, and $\lambda, \mu \ge 0$ are parameters.

Let
\[
[u]_{s,\,p} = \left(\int_{\R^{2N}} \frac{|u(x) - u(y)|^p}{|x - y|^{N+sp}}\, dx dy\right)^{1/p}
\]
be the Gagliardo seminorm of a measurable function $u : \R^N \to \R$ and let
\[
D^{s,\,p}(\R^N) = \set{u \in L^{p_s^\ast}(\R^N) : [u]_{s,\,p} < \infty}
\]
be the homogeneous fractional Sobolev space endowed with the norm $\norm{\cdot} = [\cdot]_{s,\,p}$. A weak solution of equation \eqref{1.1} is a function $u \in D^{s,\,p}(\R^N)$ satisfying
\begin{multline*}
\int_{\R^{2N}} \frac{|u(x) - u(y)|^{p - 2}\, (u(x) - u(y))\, (v(x) - v(y))}{|x - y|^{N + sp}}\, dx dy = \lambda \int_{\R^N} a(x)\, |u|^{p - 2}\, uv\, dx\\[5pt]
+ \mu \int_{\R^N} b(x)\, |u|^{q - 2}\, uv\, dx + \int_{\R^N} |u|^{p_s^\ast - 2}\, uv\, dx
\end{multline*}
for all $v \in D^{s,\,p}(\R^N)$. Weak solutions coincide with critical points of the $C^1$-functional
\begin{multline} \label{1.3}
E(u) = \frac{1}{p} \int_{\R^{2N}} \frac{|u(x) - u(y)|^p}{|x - y|^{N+sp}}\, dx dy - \frac{\lambda}{p} \int_{\R^N} a(x)\, |u|^p\, dx - \frac{\mu}{q} \int_{\R^N} b(x)\, |u|^q\, dx\\[5pt]
- \frac{1}{p_s^\ast} \int_{\R^N} |u|^{p_s^\ast}\, dx, \quad u \in D^{s,\,p}(\R^N).
\end{multline}

The eigenvalue problem
\begin{equation} \label{1.4}
(- \Delta_p)^s\, u = \lambda\, a(x)\, |u|^{p - 2}\, u \quad \text{in } \R^N
\end{equation}
will play a major role in our results. More specifically, our theorems will involve a certain increasing and unbounded sequence $\seq{\lambda_k}$ of eigenvalues of this problem (see Theorem \ref{Theorem 2.7}). First we will show that equation \eqref{1.1} has $m$ distinct pairs of nontrivial solutions for all $\lambda > 0$ in a suitably small left neighborhood of an eigenvalue of multiplicity $m \ge 1$. Let
\begin{equation} \label{1.5}
S = \inf_{u \in D^{s,\,p}(\R^N) \setminus \set{0}}\, \frac{[u]_{s,\,p}^p}{\pnorm[p_s^\ast]{u}^p}
\end{equation}
be the best fractional Sobolev constant and let
\[
\delta = \frac{S}{\pnorm[N/sp]{a}},
\]
where $\pnorm[r]{\cdot}$ denotes the norm in $L^r(\R^N)$ for $r \in [1,\infty]$.

\begin{theorem} \label{Theorem 1.1}
If $\lambda_k = \cdots = \lambda_{k+m-1} < \lambda_{k+m}$ for some $k, m \ge 1$, then for all $\lambda \in (\lambda_k - \delta,\lambda_k)$ and each $\mu \ge 0$, equation \eqref{1.1} has $m$ distinct pairs of nontrivial solutions $\pm u^\lambda_j,\, j = 1,\dots,m$ such that $E(u^\lambda_j) > 0$ and $u^\lambda_j \to 0$ in $D^{s,\,p}(\R^N) \cap L^\infty(\R^N)$ as $\lambda \nearrow \lambda_k$.
\end{theorem}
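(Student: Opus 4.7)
The plan is to invoke the abstract critical point theorem based on the $\Z_2$-cohomological index, applied to the even $C^1$-functional $E$. Three ingredients must be supplied: verification of $(PS)_c$ for levels $c$ below the threshold $c^\star := (s/N)\,S^{N/sp}$, construction of a symmetric set of high cohomological index on which $\sup E$ stays strictly below $c^\star$, and a complementary set of low cohomological index on which $E$ remains bounded below by a positive constant. Boundedness of a PS sequence in $D^{s,p}(\R^N)$ at level $c < c^\star$ follows from the convergences $E(u_n) \to c$ and $\langle E'(u_n), u_n\rangle = \o(\norm{u_n})$, combined with the H\"older bounds $\int a|u_n|^p\, dx \le \pnorm[N/sp]{a}\,\pnorm[p_s^\ast]{u_n}^p$ (and analogously for $b$); the decay conditions \eqref{1.2} on $a, b$ prevent loss of mass at infinity via weighted compact embeddings of $D^{s,p}(\R^N)$, and concentration of $|u_n|^{p_s^\ast}$ at any point is excluded by the fractional concentration--compactness principle under the energy bound $c < c^\star$.

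By Theorem~\ref{Theorem 2.7}, the sublevel set
\[
\widehat A := \set{u \in M : [u]_{s,\,p}^p \le \lambda_k},\qquad M := \set{u \in D^{s,\,p}(\R^N) : \textstyle\int a\,|u|^p\, dx = 1}
\]
has cohomological index at least $k+m-1$. For $u_0 \in \widehat A$ and $t \ge 0$, using $[u_0]_{s,\,p}^p \le \lambda_k$ and $\mu\, b\, |u_0|^q \ge 0$,
\[
E(tu_0) \;\le\; \frac{t^p}{p}\,(\lambda_k - \lambda) - \frac{t^{p_s^\ast}}{p_s^\ast}\int_{\R^N}|u_0|^{p_s^\ast}\, dx.
\]
H\"older applied to $1 = \int a|u_0|^p \le \pnorm[N/sp]{a}\,\pnorm[p_s^\ast]{u_0}^p$ gives $\int|u_0|^{p_s^\ast}\, dx \ge \pnorm[N/sp]{a}^{-p_s^\ast/p}$; maximizing in $t \ge 0$ yields
\[
\sup_{u_0 \in \widehat A,\ t \ge 0}\, E(tu_0) \;\le\; \frac{s}{N}\bigl((\lambda_k - \lambda)\,\pnorm[N/sp]{a}\bigr)^{N/sp} \;<\; c^\star,
\]
the strict inequality being equivalent to $\lambda > \lambda_k - \delta$. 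Choosing $R > 0$ large enough that $E(Ru_0) \le 0$ for every $u_0 \in \widehat A$, the set $A_0 := R\widehat A$ is symmetric with $i(A_0) \ge k+m-1$ and $\sup_{A_0} E \le 0$, while the cone $A := \set{tu_0 : u_0 \in \widehat A,\, t \in [0, R]}$ satisfies $\sup_A E < c^\star$. A complementary set of low cohomological index on which $E$ stays strictly positive at small radius is built from the superlevel set of $\Psi(u) := [u]_{s,\,p}^p/\int a|u|^p$ on $M$, exploiting $\lambda < \lambda_k$; the abstract theorem then produces $m$ distinct pairs $\pm u^\lambda_j$ of nontrivial critical points with $0 < E(u^\lambda_j) \le \sup_A E$.

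To pass to the limit, $\sup_A E \to 0$ as $\lambda \nearrow \lambda_k$, so $E(u^\lambda_j) \to 0$. A rescaling argument, testing the equation against $u^\lambda_j/[u^\lambda_j]_{s,\,p}$ and exploiting $p < q < p_s^\ast$, rules out unboundedness of $\set{u^\lambda_j}$ in $D^{s,\,p}(\R^N)$; along any sequence $\lambda_n \to \lambda_k^-$ the PS property at levels in $(0, c^\star)$ yields a subsequence converging strongly to a critical point $u_\infty$ of $E_{\lambda_k}$ with $E_{\lambda_k}(u_\infty) = 0$. Combining $E_{\lambda_k}(u_\infty) = 0$ with $\langle E'_{\lambda_k}(u_\infty), u_\infty\rangle = 0$ eliminates the $[u_\infty]_{s,\,p}^p - \lambda_k \int a|u_\infty|^p$ contribution and leaves
\[
\Bigl(\frac{1}{p} - \frac{1}{q}\Bigr)\mu \int_{\R^N} b\, |u_\infty|^q\, dx \;+\; \frac{s}{N}\int_{\R^N}|u_\infty|^{p_s^\ast}\, dx \;=\; 0,
\]
forcing $u_\infty = 0$ since both terms are non-negative. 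The $L^\infty$-convergence $u^\lambda_j \to 0$ then follows from the novel regularity result of the paper, which bounds $\pnorm[\infty]{u}$ in terms of $[u]_{s,\,p}$ for solutions of \eqref{1.1}. The principal obstacle is the $(PS)_c$ verification below $c^\star$ on the homogeneous space $D^{s,\,p}(\R^N)$, where the critical exponent and the unboundedness of $\R^N$ together require the fractional Lions concentration--compactness apparatus combined with the weighted compact embeddings derived from \eqref{1.2}; a secondary difficulty is the construction of the complementary low-index set, since the nonlinearity of \eqref{1.4} precludes classical orthogonal eigenspace decompositions.
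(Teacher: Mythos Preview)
Your overall strategy matches the paper's: apply Theorem~\ref{Theorem 2.6} with $c^\ast=\frac{s}{N}S^{N/sp}$, build the high-index set from a sublevel set of $\Psi$, the low-index complement from a superlevel set, and deduce $u_j^\lambda\to 0$ from $E(u_j^\lambda)\to 0$. However there is a genuine gap in your construction of $A_0$.

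You take $\widehat A=\{u\in M:[u]_{s,p}^p\le\lambda_k\}$ and set $A_0=R\widehat A$. This set is closed and bounded in $D^{s,p}(\R^N)$, but it is \emph{not compact}, and Theorem~\ref{Theorem 2.6} explicitly requires $A_0$ to be compact (otherwise the cone $X$ need not belong to the family $\A^\ast_j$ of compact sets over which the minimax values $c_j^\ast$ are defined, and the chain of inequalities in the conclusion breaks down). You cannot simply pass to a compact subset of the closed sublevel $\{\Psi\le\lambda_k\}$ with the same index: the open set $\{\Psi<\lambda_k\}$ has index only $k-1$, so any compact set sitting strictly below level $\lambda_k$ misses the extra $m$ units of index carried by the eigenfunctions at $\lambda_k$. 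The paper resolves this by perturbing upward: for small $\eps_\lambda=\eps(\lambda_k-\lambda)$ with $\lambda_{k+m-1}+\eps_\lambda<\lambda_{k+m}$, the \emph{open} set $\{\Psi<\lambda_{k+m-1}+\eps_\lambda\}$ already has index $k+m-1$ by Theorem~\ref{Theorem 2.7}\,\ref{Theorem 2.7.iii}, and Degiovanni--Lancelotti's extraction lemma produces a compact symmetric subset $A_0$ of that index. The price is that on $A_0$ one only has $I(u)\le(\lambda_k+\eps_\lambda)J(u)$, so your clean bound $\frac{s}{N}[(\lambda_k-\lambda)\pnorm[N/sp]{a}]^{N/sp}$ becomes $\frac{s}{N}[(1+\eps)(\lambda_k-\lambda)\pnorm[N/sp]{a}]^{N/sp}$; this is still $<c^\ast$ provided $\eps$ is chosen as in \eqref{3.1}.

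Two smaller remarks. First, the $(PS)_c$ verification below $c^\ast$ does \emph{not} require the fractional Lions concentration--compactness machinery; Lemma~\ref{Lemma 2.3} uses only the weighted compact embeddings (Corollary~\ref{Corollary 2.2}), the Br\'ezis--Lieb lemma, and a direct energy comparison. Second, your route to $u_j^\lambda\to 0$ in $D^{s,p}(\R^N)$ via extracting a PS-limit $u_\infty$ for $E_{\lambda_k}$ is correct but unnecessarily indirect (and you must justify that the family with varying $\lambda$ is a $(PS)_0$ sequence for the fixed functional $E_{\lambda_k}$). The paper instead subtracts $\frac{1}{p}\,E'(u_j^\lambda)u_j^\lambda$ from $E(u_j^\lambda)$ to obtain directly that $\int b|u_j^\lambda|^q$ and $\int|u_j^\lambda|^{p_s^\ast}$ tend to zero, and then reads off $[u_j^\lambda]_{s,p}\to 0$ from \eqref{3.4} together with $\int a|u_j^\lambda|^p\le\pnorm[N/sp]{a}\pnorm[p_s^\ast]{u_j^\lambda}^p$. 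For the $L^\infty$ convergence, note that Theorem~\ref{Theorem 2.8} is applied \emph{twice}: first with $n_\lambda=|u_\lambda|^{p_s^\ast-p}$ (whose $L^{N/sp}$-norm is small once $\pnorm[p_s^\ast]{u_\lambda}$ is small) to gain higher integrability, then with $n\equiv 0$ to reach $L^\infty$.
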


In particular, we have the following existence result when $m = 1$ (since $\lambda_k \nearrow \infty$, by taking $k$ larger if necessary, we may assume that $\lambda_k < \lambda_{k+1}$).

\begin{corollary}
For each $k \ge 1$, for all $\lambda \in (\lambda_k - \delta,\lambda_k)$ and each $\mu \ge 0$, equation \eqref{1.1} has a pair of nontrivial solutions $\pm u^\lambda$ such that $E(u^\lambda) > 0$ and $u^\lambda \to 0$ in $D^{s,\,p}(\R^N) \cap L^\infty(\R^N)$ as $\lambda \nearrow \lambda_k$.
\end{corollary}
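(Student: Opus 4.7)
The plan is straightforward: the corollary is the $m=1$ instance of Theorem \ref{Theorem 1.1}, so the proof reduces to verifying the hypothesis of that theorem with multiplicity one and then transcribing the conclusion.

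First I would clarify the indexing, following the parenthetical remark preceding the statement. Since the sequence $(\lambda_k)$ is nondecreasing and $\lambda_k \nearrow \infty$, the set of indices at which a strict jump $\lambda_k < \lambda_{k+1}$ occurs is infinite. Given any prescribed $k \ge 1$, I would replace it, if necessary, by the smallest index $k' \ge k$ with $\lambda_{k'} < \lambda_{k'+1}$; at such an index the hypothesis of Theorem \ref{Theorem 1.1} holds with $m = 1$, namely $\lambda_{k'} = \lambda_{k'} < \lambda_{k'+1}$.

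Next I would apply Theorem \ref{Theorem 1.1} at this $k'$: for every $\lambda \in (\lambda_{k'} - \delta, \lambda_{k'})$ and every $\mu \ge 0$, it supplies a single pair $\pm u^\lambda_1$ of nontrivial weak solutions of \eqref{1.1} with $E(u^\lambda_1) > 0$ and $u^\lambda_1 \to 0$ in $D^{s,\,p}(\R^N) \cap L^\infty(\R^N)$ as $\lambda \nearrow \lambda_{k'}$. Relabeling this pair as $\pm u^\lambda$ yields exactly the conclusion of the corollary. No genuine obstacle arises: the argument is a pure specialization, so no new variational construction, compactness argument, or estimate on $\delta$ is required beyond what Theorem \ref{Theorem 1.1} has already provided; the only subtle point is the re-indexing, which is forced whenever some early $\lambda_k$ has multiplicity greater than one.
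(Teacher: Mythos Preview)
Your proposal is correct and matches the paper's own justification: the corollary is simply the $m=1$ case of Theorem \ref{Theorem 1.1}, invoked after replacing $k$ by a possibly larger index at which $\lambda_k < \lambda_{k+1}$ (which exists since $\lambda_k \nearrow \infty$). The one point you leave implicit---that $\lambda_{k'} = \lambda_k$ under your re-indexing, so the interval $(\lambda_{k'}-\delta,\lambda_{k'})$ coincides with the original $(\lambda_k-\delta,\lambda_k)$---is equally implicit in the paper and is immediate from the definition of $k'$.
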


We will also show that equation \eqref{1.1} has arbitrarily many solutions for all sufficiently large $\mu > 0$ under an additional assumption on the coefficients $a(x)$ and $b(x)$.

\begin{theorem} \label{Theorem 1.3}
Assume that
\[
\frac{a}{b^{p/q}} \in L^{q/(q-p)}(\R^N)
\]
and let $\lambda \ge 0$. Given any $m \ge 1$, $\exists \mu_{m,\,\lambda} > 0$ such that for all $\mu > \mu_{m,\,\lambda}$, equation \eqref{1.1} has $m$ distinct pairs of nontrivial solutions $\pm u^\mu_j,\, j = 1,\dots,m$ such that $E(u^\mu_j) > 0$ and $u^\mu_j \to 0$ in $D^{s,\,p}(\R^N)$ as $\mu \nearrow \infty$. In particular, the number of solutions goes to infinity as $\mu \nearrow \infty$.
\end{theorem}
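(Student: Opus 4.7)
The plan is to apply an abstract $\Z_2$-cohomological-index multiplicity theorem (in the spirit of the argument underlying Theorem~\ref{Theorem 1.1}) to the even $C^1$-functional $E$, arranging that the candidate critical values lie below the Palais--Smale threshold for the critical fractional $p$-Laplacian as soon as $\mu$ is large enough.

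The first step is to absorb the $\lambda$-term into the $\mu b$-term. The hypothesis $a/b^{p/q} \in L^{q/(q-p)}(\R^N)$ together with H\"older's inequality with conjugate exponents $q/(q-p)$ and $q/p$ yields
\[
\int_{\R^N} a\,|u|^p\,dx \;\le\; \pnorm[q/(q-p)]{a/b^{p/q}}\left(\int_{\R^N} b\,|u|^q\,dx\right)^{p/q},
\]
and Young's inequality with a small parameter then shows that, for every $\eta > 0$,
\[
\frac{\lambda}{p}\int_{\R^N} a\,|u|^p\,dx \;\le\; \frac{\eta\mu}{q}\int_{\R^N} b\,|u|^q\,dx \;+\; C_{\lambda,\eta}\,\mu^{-(q-p)/p}.
\]
So the subcritical contribution to $E$ is, up to an error that vanishes with $\mu$, the $\mu b$-term alone.

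For the linking geometry, combining this with the fractional Sobolev embedding $D^{s,p}(\R^N) \incl L^{p_s^\ast}(\R^N)$ shows that $E(u) \ge \alpha_\mu > 0$ on some small sphere $\set{[u]_{s,p} = \rho_\mu}$. For the negative direction, since $b \ge 0$ and $b \not\equiv 0$, I would fix $m$ linearly independent smooth compactly supported functions on a set where $b \ge b_0 > 0$ and let $V_m$ be their span. Then $\big(\int b\,|u|^q\,dx\big)^{1/q}$ is a norm on $V_m$ equivalent to $[u]_{s,p}$, and a one-parameter optimization along rays gives
\[
\sup_{u \in V_m} E(u) \;\le\; C_m\,\mu^{-p/(q-p)} \;\longrightarrow\; 0 \quad \text{as } \mu \to \infty.
\]
A standard concentration-compactness analysis (Lions profile decomposition in the homogeneous fractional setting) provides \PS{c} for all $c < \tfrac{s}{N}\,S^{N/sp}$, so once $\mu$ is large the whole interval $[\alpha_\mu, C_m\mu^{-p/(q-p)}]$ falls in the compactness range. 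The cohomological-index linking theorem applied to the pair $(\set{[u]_{s,p} = \rho_\mu}, V_m)$ then produces $m$ distinct pairs $\pm u_j^\mu$ of critical points whose energies lie in this interval. The convergence $u_j^\mu \to 0$ in $D^{s,p}(\R^N)$ follows by combining $E(u_j^\mu) \to 0$ with the identity $\dualp{E'(u_j^\mu)}{u_j^\mu} = 0$ (which forces $\mu\int b\,|u_j^\mu|^q\,dx \to 0$ and $\pnorm[p_s^\ast]{u_j^\mu}^{p_s^\ast} \to 0$) together with the absorbing estimate above.

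The hardest point is the verification of \PS{c} in the noncompact homogeneous space $D^{s,p}(\R^N)$ with critical Sobolev growth: both translations at infinity and mass concentration at a point can destroy compactness, and ruling them out requires the level $c$ to lie strictly below the threshold $\tfrac{s}{N}\,S^{N/sp}$ inherited from the fractional Talenti profile. It is precisely the smallness $C_m\mu^{-p/(q-p)}$ produced in the preceding step that places all of the $m$ candidate critical levels below this threshold simultaneously once $\mu$ is sufficiently large.
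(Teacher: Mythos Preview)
Your overall strategy---produce $m$ linking levels below the compactness threshold $\tfrac{s}{N}S^{N/sp}$ and then invoke a $\Z_2$-index theorem---matches the paper's, and your upper bound $\sup_{V_m}E\le C_m\,\mu^{-p/(q-p)}$ is exactly the mechanism the paper uses (the paper obtains the same decay rate, via the same H\"older inequality $\int a|u|^p\le\pnorm[q/(q-p)]{a/b^{p/q}}\,\pnorm[q,b]{u}^p$, but applied directly on the cone over $A_0$ rather than through a Young absorption). The convergence $u^\mu_j\to 0$ at the end is also essentially the paper's argument.

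The genuine gap is in the lower half of the linking. You claim $E(u)\ge\alpha_\mu>0$ on the \emph{full} sphere $\{[u]_{s,p}=\rho_\mu\}$, but this fails as soon as $\lambda\ge\lambda_1$: the term $-\tfrac{\lambda}{p}\int a|u|^p$ has the same $p$-homogeneity as the leading term, so along any first eigenfunction $u_1$ one has $\tfrac{1}{p}[tu_1]_{s,p}^p-\tfrac{\lambda}{p}\int a|tu_1|^p\le 0$ for every $t>0$, and hence $E(tu_1)\le 0$ on every sphere. Your absorption step does not rescue this: after Young, the error term is of order $\mu^{-p/(q-p)}$ (not $\mu^{-(q-p)/p}$ as you wrote), which is precisely the same scale as the maximum of $\tfrac{\rho^p}{p}-\tfrac{\mu C}{q}\rho^q$; for large $\lambda$ the constant in the error term dominates and no choice of $\rho_\mu$ gives a positive infimum.

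The paper handles this by \emph{not} asking for positivity on the whole sphere. It fixes $k$ with $\lambda_{k-1}\le\lambda<\lambda_k$ and takes $B_0=\Psi_{\lambda_k}\cup(S\setminus S^+)$, i.e.\ the part of the unit sphere on which $I(u)-\lambda J(u)\ge(1-\lambda/\lambda_k)I(u)>0$; there the $p$-homogeneous part is uniformly coercive and $\inf_{\rho B_0}E>0$ for small $\rho$. The price is that $i(S\setminus B_0)=k-1$ rather than $0$, so one must exhibit an $A_0$ of index $k+m-1$; the paper does this by taking a compact symmetric $A_0\subset\{\Psi<\lambda_{k+m}\}$, which is available by Theorem~\ref{Theorem 2.7}\,\ref{Theorem 2.7.iii} after enlarging $m$ so that $\lambda_{k+m-1}<\lambda_{k+m}$. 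A span $V_m$ of bump functions gives index $m$, not $k+m-1$, and is not adapted to the eigenvalue splitting that makes the lower bound work. In short: your pair $(\{[u]_{s,p}=\rho_\mu\},V_m)$ only covers $0\le\lambda<\lambda_1$; for general $\lambda\ge 0$ you need the eigenvalue-based choice of $A_0$ and $B_0$.
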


In particular, we have the following multiplicity result for the equation
\begin{equation} \label{1.6}
(- \Delta_p)^s\, u = \frac{\lambda\, |u|^{p - 2}\, u}{1 + |x|^{N + \alpha}} + \frac{\mu\, |u|^{q - 2}\, u}{1 + |x|^{N + \beta}} + |u|^{p_s^\ast - 2}\, u \quad \text{in } \R^N,
\end{equation}
where $p < q < p_s^\ast$, $\alpha, \beta > 0$, and $\lambda, \mu \ge 0$ are parameters.

\begin{corollary} \label{Corollary 1.4}
Assume that
\[
\frac{\alpha}{\beta} > \frac{p}{q}
\]
and let $\lambda \ge 0$. Given any $m \ge 1$, $\exists \mu_{m,\,\lambda} > 0$ such that for all $\mu > \mu_{m,\,\lambda}$, equation \eqref{1.6} has $m$ distinct pairs of nontrivial solutions $\pm u^\mu_j,\, j = 1,\dots,m$ such that $E(u^\mu_j) > 0$ and $u^\mu_j \to 0$ in $D^{s,\,p}(\R^N)$ as $\mu \nearrow \infty$. In particular, the number of solutions goes to infinity as $\mu \nearrow \infty$.
\end{corollary}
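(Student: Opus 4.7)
The plan is to recognize Corollary \ref{Corollary 1.4} as a direct specialization of Theorem \ref{Theorem 1.3} to the coefficients $a(x) = 1/(1 + |x|^{N+\alpha})$ and $b(x) = 1/(1 + |x|^{N+\beta})$ appearing in equation \eqref{1.6}. These coefficients manifestly satisfy \eqref{1.2} with $c_a = c_b = 1$, so the only substantive step is to verify the integrability hypothesis $a/b^{p/q} \in L^{q/(q-p)}(\R^N)$ from the stronger assumption $\alpha/\beta > p/q$.

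I would compute pointwise
\[
\frac{a(x)}{b(x)^{p/q}} = \frac{(1 + |x|^{N+\beta})^{p/q}}{1 + |x|^{N+\alpha}},
\]
which is bounded on any compact set, and use the elementary two-sided estimate $1 + |x|^{N+\beta} \asymp (1 + |x|)^{N+\beta}$ to conclude that
\[
\frac{a(x)}{b(x)^{p/q}} \asymp (1 + |x|)^{(N+\beta)\,p/q - (N+\alpha)}.
\]
Writing $r = q/(q-p)$, the function $(a/b^{p/q})^r$ is integrable on $\R^N$ if and only if
\[
r\left[(N+\beta)\,\frac{p}{q} - (N+\alpha)\right] < -N,
\]
which, after multiplying through by $(q-p)/q$ and simplifying, is equivalent to $\alpha > (p/q)\,\beta$, exactly the standing hypothesis. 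Hence $a/b^{p/q} \in L^{q/(q-p)}(\R^N)$.

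Once this verification is in place, Theorem \ref{Theorem 1.3} applies verbatim and delivers the claimed $m$ pairs $\pm u^\mu_j$ with $E(u^\mu_j) > 0$ and $u^\mu_j \to 0$ in $D^{s,\,p}(\R^N)$ as $\mu \nearrow \infty$. There is no hidden difficulty: the only thing to be careful about is turning the asymptotic relation into an honest two-sided bound uniform on all of $\R^N$, which is routine. The entire proof is thus a short deduction from Theorem \ref{Theorem 1.3}.
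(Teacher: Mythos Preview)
Your proposal is correct and follows essentially the same approach as the paper: both reduce Corollary \ref{Corollary 1.4} to Theorem \ref{Theorem 1.3} by verifying that $a/b^{p/q}\in L^{q/(q-p)}(\R^N)$, the paper via an explicit split over $B(0,1)$ and its complement in spherical coordinates, and you via the equivalent asymptotic $(1+|x|^{N+\gamma})\asymp(1+|x|)^{N+\gamma}$. The resulting exponent condition $\alpha q>\beta p$ is identical in both arguments.
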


Theorem \ref{Theorem 1.1}, Theorem \ref{Theorem 1.3}, and Corollary \ref{Corollary 1.4} will be proved in Section \ref{Section 3} after some preliminaries in the next section. Related results can be found in \cite{MR4790797,MR4145649,MR3831057,MR3538875}.

\section{Preliminaries}

\subsection{Weighted Lebesgue spaces}

Let $\gamma > 0$ and let
\[
w(x) = \frac{1}{1 + |x|^{N + \gamma}}, \quad x \in \R^N.
\]
For $q \in (1,\infty)$, let $L^q(\R^N,w)$ denote the weighted Lebesgue space of measurable functions $u : \R^N \to \R$ for which
\begin{equation} \label{2.1}
\pnorm[q,w]{u} := \left(\int_{\R^N} w(x)\, |u|^q\, dx\right)^{1/q} < \infty.
\end{equation}
We have the following embedding result.

\begin{lemma} \label{Lemma 2.1}
$D^{s,\,p}(\R^N)$ is embedded in $L^q(\R^N,w)$ continuously for $q \in [1,p_s^\ast]$ and compactly for $q \in [1,p_s^\ast)$.
\end{lemma}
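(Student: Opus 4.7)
The plan is to separate the continuous-embedding claim from the compactness claim, and within each to handle the endpoint $q = p_s^\ast$ (for continuity only) by the trivial bound $w \le 1$ and the subcritical range $q \in [1, p_s^\ast)$ by a single H\"older estimate that reduces everything to the integrability of an appropriate power of $w$.

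For the continuous embedding, the case $q = p_s^\ast$ is immediate from $w \le 1$ combined with the Sobolev inequality \eqref{1.5}, giving $\pnorm[p_s^\ast,w]{u} \le \pnorm[p_s^\ast]{u} \le S^{-1/p}\, \norm{u}$. For $q \in [1, p_s^\ast)$, H\"older applied to $|u|^q \cdot w$ with conjugate exponents $p_s^\ast/q$ and $p_s^\ast/(p_s^\ast - q)$ yields
\[
\int_{\R^N} |u|^q\, w\, dx \le \pnorm[p_s^\ast]{u}^q\, \left(\int_{\R^N} w(x)^{p_s^\ast/(p_s^\ast - q)}\, dx\right)^{(p_s^\ast - q)/p_s^\ast}.
\]
The weight integral is finite because $w(x)^{p_s^\ast/(p_s^\ast - q)} \le (1 + |x|^{N + \gamma})^{-p_s^\ast/(p_s^\ast - q)}$ and the resulting decay exponent $(N + \gamma)\, p_s^\ast/(p_s^\ast - q)$ is strictly larger than $N$, using $\gamma > 0$.

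For compactness with $q \in [1, p_s^\ast)$, let $u_n \wto 0$ in $D^{s,p}(\R^N)$ and fix $\eps > 0$. Denoting by $B_R$ the open ball of radius $R$ centered at the origin, split $\int_{\R^N} |u_n|^q\, w\, dx$ at radius $R$. The same H\"older estimate applied on $\R^N \setminus B_R$ controls the tail by $C\, \pnorm[p_s^\ast]{u_n}^q\, \tau(R)$ with $\tau(R) \to 0$ as $R \to \infty$; since $\seq{\pnorm[p_s^\ast]{u_n}}$ is bounded, $R$ can be fixed so large that this tail stays below $\eps/2$ uniformly in $n$. On $B_R$, the restrictions $u_n|_{B_R}$ are bounded in the full fractional Sobolev space $W^{s,p}(B_R)$---the Gagliardo seminorm on $B_R \times B_R$ is dominated by $\norm{u_n}$, and the $L^p(B_R)$ norm is controlled via H\"older on the bounded ball by $\pnorm[p_s^\ast]{u_n}$, itself bounded by Sobolev---so the Rellich--Kondrachov theorem for fractional Sobolev spaces on the bounded Lipschitz domain $B_R$ yields $u_n \to 0$ strongly in $L^q(B_R)$ along a subsequence. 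Since $w \le 1$, this gives $\int_{B_R} |u_n|^q\, w\, dx < \eps/2$ for $n$ large, and the customary subsequence-of-every-subsequence argument upgrades this to convergence of the full sequence.

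The only real technical point is the local Rellich step in the homogeneous setting: one must confirm that weak convergence in $D^{s,p}(\R^N)$ implies boundedness of the restrictions in the full Sobolev norm on $B_R$, so that the standard compact embedding applies. This is straightforward once the $L^p(B_R)$ bound is produced via Sobolev and H\"older as sketched, so I do not anticipate any serious obstacle beyond standard estimates.
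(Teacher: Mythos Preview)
Your proposal is correct and follows essentially the same route as the paper: H\"older against the integrable weight for the continuous embedding (with the trivial $w\le 1$ bound at $q=p_s^\ast$), and for compactness a tail/core split at radius $R$, the tail controlled by the same H\"older estimate and the core by the compact embedding $W^{s,p}(B_R)\hookrightarrow L^q(B_R)$ after observing that restrictions of a $D^{s,p}$-bounded sequence are $W^{s,p}(B_R)$-bounded. The only cosmetic differences are that the paper works with a general weak limit $u$ and identifies it via a.e.\ convergence, whereas you normalize to $u_n\rightharpoonup 0$ and invoke the subsequence-of-every-subsequence device; both are standard and equivalent here.
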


\begin{proof}
First we note that $w\in L^r(\R^N)$ for $1\le r\le \infty$. Let $u\in D^{s,p}(\R^N)$, then $u\in L^{p_s^*}(\R^N)$, and thus for some positive constant we get for $1\le q< p_s^*$
$$
\int_{\R^N} w|u|^q\,dx\le |w|_{\frac{p_s^*}{p^*-q}}|u|_{p_s^*}^q\le c |w|_{\frac{p_s^*}{p_s^*-q}}[u]_{s,p}^q,
$$
that is,
$$
|u|_{q,w}\le c |w|_{\frac{p_s^*}{p_s^*-q}}^{\frac{1}{q}}[u]_{s,p},
$$
which shows that $i_w: D^{s,p}(\R^N)\to L^q(\R^N, w)$ is linear and continuous for $1\le q< p_s^*$. If $q=p_s^*$, then due to $w\in L^\infty(\R^N)$, the inequality
$$
|u|_{p_s^*,w}\le c[u]_{s,p}
$$
is trivially satisfied, which proves the first part.

As for the compact embedding for $1\le q < p_s^*$, let $(u_n)\subset D^{s,p}(\R^N)$ be bounded. Since $D^{s,p}(\R^N)$ is reflexive, there exists a weakly convergent subsequence still denoted by $(u_n)$, that is, $u_n\rightharpoonup u$ in $D^{s,p}(\R^N)$, which due to $D^{s,p}(\R^N)\hookrightarrow L^{p_s^*}(\R^N)$ yields $u_n\rightharpoonup u$ in $L^{p_s^*}(\R^N)$. On the other hand $(u_n)\subset W^{s,p}(B_R)$ is bounded for any $R$, where $B_R=B(0,R)$ is the open ball with radius $R$, and $W^{s,p}(B_R)$ is the fractional Sobolev space defined by
$$
W^{s,p}(B_R)=\left\{u\in L^p(B_R): \iint_{B_R\times B_R}\frac{|u(x)-u(y)|^p}{|x-y|^{N+sp}}\,dxdy<\infty\right\},
$$
endowed with the norm
$$
|u|_{W^{s,p}(B_R)}=\left(|u|^p_{L^p(B_R)}+[u]_{s,p,B_R}^p\right)^{\frac{1}{p}},
$$
where
$$
[u]_{s,p,B_R}^p=\iint_{B_R\times B_R}\frac{|u(x)-u(y)|^p}{|x-y|^{N+sp}}\,dxdy.
$$
The space $W^{s,p}(B_R)$ is a uniformly convex Banach space and thus reflexive. Also we have $L^{p_s^*}(\R^N)\hookrightarrow L^{p_s^*} (B_R)\hookrightarrow L^{q}(B_R)$, hence $u_n\rightharpoonup u$ in $L^{q}(B_R)$. Since $(u_n)$ is bounded in $W^{s,p}(B_R)$ and $W^{s,p}(B_R)\hookrightarrow\hookrightarrow L^q(B_R)$ is compactly embedded for $1\le q< p_s^*$ (see, e.g., \cite[Theorem 4.54]{MR2895178}), there is a subsequence again denoted by $(u_n)$ such that $u_n\to v$ in $L^q(B_R)$ which by passing again to a subsequence yields $u_n(x)\to v(x)$ for a.a. $x\in B_R$. The weak convergence $u_n\rightharpoonup u$ in $L^{q}(B_R)$ along with $u_n(x)\to v(x)$ for a.a. $x\in B_R$ implies that $u=v$ in $B_R$. Let us show that in fact $u_n\to u$ in $L^q(\R^N, w)$.

For $\varepsilon>0$ arbitrarily be given and any $R>0$, we consider
\begin{equation}\label{S-203}
|u_n-u|_{q,w}^q=\int_{\R^N\setminus B_R} w|u_n-u|^q\,dx+\int_{B_R} w|u_n-u|^q\,dx.
\end{equation}
Since $(u_n)$ is bounded in $D^{s,p}(\R^N)$, that is $[u_n]_{s,p}\le c$, it is also bounded in $L^{p_s^*}(\R^N)$, and thus with some generic constant $c$ independent of $n$ and $R$, we can estimate the first integral on the right-hand side of (\ref{S-203}) as follows:
\begin{eqnarray*}
&&\int_{\R^N\setminus B_R} w|u_n-u|^q\,dx \le c\int_{\R^N\setminus B_R} w\big(|u_n|^q+|u|^q\big)\,dx\\
&&\le c |w|_{L^{\frac{p_s^*}{p_s^*-q}}(\R^N\setminus B_R)}\Big(|u_n|^q_{L^{p_s^*}(\R^N\setminus B_R)}+|u|^q_{L^{p_s^*}(\R^N\setminus B_R)}\Big)\\
&&\le c |w|_{L^{\frac{p_s^*}{p_s^*-q}}(\R^N\setminus B_R)}\Big(|u_n|^q_{p_s^*}+|u|^q_{p_s^*}\Big),
\end{eqnarray*}
which, together with the estimate $|u_n|_{p_s^*}\le c[u_n]_{s,p}\le c$, yields
\begin{equation}\label{S-204}
\int_{\R^N\setminus B_R} w|u_n-u|^q\,dx\le c\,|w|_{L^{\frac{p_s^*}{p_s^*-q}}(\R^N\setminus B_R)}.
\end{equation}
The right-hand side of (\ref{S-204}) can be further estimated as
\begin{equation}\label{S-205}
|w|_{L^{\frac{p_s^*}{p_s^*-q}}(\R^N\setminus B(0,R))}^{\frac{p_s^*}{p_s^*-q}}\le c\int_R^\infty\Big(\frac{1}{1+\varrho^{N+\gamma}}\Big)^{\frac{p_s^*}{p_s^*-q}}\varrho^{N-1}\,d\varrho\le c R^{-(N+\gamma){\frac{p_s^*}{p_s^*-q}}+N},
\end{equation}
since $-(N+\gamma){\frac{p_s^*}{p_s^*-q}}+N<0$. It follows from (\ref{S-204}) and (\ref{S-205}) the existence of $R>0$ sufficiently large such that
\begin{equation}\label{S-206}
\int_{\R^N\setminus B_R} w|u_n-u|^q\,dx< \frac{\varepsilon}{2}, ~~\forall\ n\in \mathbb{N}.
\end{equation}
In view of $u_n\to u$ (strongly) in $ L^q(B_R)$ and taking into account that $w\in L^\infty(\R^N)$ one gets
\begin{equation}\label{S-207}
\int_{B(0,R)} w|u_n-u|^q\,dx< \frac{\varepsilon}{2} \ \mbox{ for $n$ sufficiently large}.
\end{equation}
The estimates (\ref{S-206}) and (\ref{S-207}) complete the proof.
\end{proof}

The following corollary is immediate.

\begin{corollary} \label{Corollary 2.2}
Let $a, b : \R^N \to \R$ be measurable functions satisfying \eqref{1.2}. If $u_j \wto u$ in $D^{s,\,p}(\R^N)$ and $u_j \to u$ a.e., then
\[
\int_{\R^N} a(x)\, |u_j|^p\, dx \to \int_{\R^N} a(x)\, |u|^p\, dx, \qquad \int_{\R^N} b(x)\, |u_j|^q\, dx \to \int_{\R^N} b(x)\, |u|^p\, dx
\]
and
\begin{eqnarray*}
&&\int_{\R^N} a(x)\, |u_j|^{p - 2}\, u_j v\, dx \to \int_{\R^N} a(x)\, |u|^{p - 2}\, uv\, dx,\\
&&\int_{\R^N} b(x)\, |u_j|^{q - 2}\, u_j v\, dx \to \int_{\R^N} b(x)\, |u|^{q - 2}\, uv\, dx
\end{eqnarray*}
for all $v \in D^{s,\,p}(\R^N)$.
\end{corollary}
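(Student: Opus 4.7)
My plan is to reduce both convergence statements to Lemma \ref{Lemma 2.1} via the pointwise domination in \eqref{1.2}. Writing $w_\alpha(x) = 1/(1 + |x|^{N+\alpha})$ and $w_\beta(x) = 1/(1+|x|^{N+\beta})$, one has $a \le c_a\, w_\alpha$ and $b \le c_b\, w_\beta$ almost everywhere. Since $p, q \in [1, p_s^\ast)$, Lemma \ref{Lemma 2.1} provides the compact embeddings $D^{s,p}(\R^N) \incl L^p(\R^N, w_\alpha)$ and $D^{s,p}(\R^N) \incl L^q(\R^N, w_\beta)$. From $u_j \wto u$ in $D^{s,p}(\R^N)$ these embeddings yield, along subsequences, strong convergence in the weighted spaces; the assumed a.e.\ convergence pins down the limit as $u$, and the usual subsequence principle promotes this to convergence of the full sequence. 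Thus $u_j \to u$ in $L^p(\R^N, w_\alpha)$ and in $L^q(\R^N, w_\beta)$.

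For the first two claimed convergences, I would use the elementary inequality $\big||u_j|^p - |u|^p\big| \le C(|u_j|^{p-1} + |u|^{p-1})|u_j - u|$ together with a Hölder inequality weighted by $a$, with exponents $p/(p-1)$ and $p$, to obtain
\[
\int_{\R^N} a\, \big||u_j|^p - |u|^p\big|\, dx \le C \Bigl(\int_{\R^N} a\,(|u_j|^p + |u|^p)\, dx\Bigr)^{(p-1)/p} \Bigl(\int_{\R^N} a\,|u_j - u|^p\, dx\Bigr)^{1/p}.
\]
Since $a \le c_a\, w_\alpha$, the first factor is uniformly bounded in $j$ and the second tends to $0$ by the strong convergence in $L^p(\R^N, w_\alpha)$. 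The corresponding statement for $b$ with exponent $q$ is proved identically.

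For the two convergences involving an arbitrary $v \in D^{s,p}(\R^N)$, I would apply Vitali's convergence theorem to $\phi_j := a\,\big(|u_j|^{p-2} u_j - |u|^{p-2} u\big)\, v$. Pointwise, $\phi_j \to 0$ almost everywhere by the a.e.\ convergence of $u_j$. For uniform integrability and tightness, the same Hölder-type estimate gives, for every measurable $E \subseteq \R^N$,
\[
\int_E |\phi_j|\, dx \le C \Bigl(\int_{\R^N} a\,(|u_j|^p + |u|^p)\, dx\Bigr)^{(p-1)/p} \Bigl(\int_E a\, |v|^p\, dx\Bigr)^{1/p},
\]
and the right-hand side is uniformly small in $j$ whenever $E$ has small measure or lies outside a large ball, because $a\,|v|^p \le c_a\, w_\alpha\, |v|^p$ is integrable by Lemma \ref{Lemma 2.1} applied to $v$. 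The $b$-version is analogous.

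The only mild obstacle is this last step: there is no $j$-independent dominating function for the integrand, so one cannot invoke Lebesgue's dominated convergence theorem directly, and Vitali's theorem (equivalently, the weak continuity of the Nemytskii operator $u \mapsto |u|^{p-2}u$ from $L^p(\R^N,w_\alpha)$ into the corresponding weighted dual) is the right tool.
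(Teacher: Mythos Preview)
Your proof is correct; the paper itself provides no argument for this corollary, stating only that it is ``immediate'' from Lemma~\ref{Lemma 2.1}, and your reasoning is precisely the natural way to flesh out that remark. A minor streamlining: once you have $u_j \to u$ strongly in $L^p(\R^N, w_\alpha)$, the bound $0 \le a \le c_a\, w_\alpha$ gives $u_j \to u$ strongly in $L^p(\R^N, a\,dx)$ as well, so the first convergence is simply continuity of the norm, and the third follows from the standard continuity of the Nemytskii map $t \mapsto |t|^{p-2}t$ from $L^p(a\,dx)$ to $L^{p/(p-1)}(a\,dx)$ combined with H\"older---your Vitali argument is essentially a direct proof of this continuity.
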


\subsection{Palais-Smale condition}

We recall that the variational functional $E$ given in \eqref{1.3} satisfies the Palais-Smale compactness condition at the level $c \in \R$, or the \PS{c} condition for short, if every sequence $\seq{u_j} \subset D^{s,\,p}(\R^N)$ satisfying $E(u_j) \to c$ and $E'(u_j) \to 0$, called a \PS{c} sequence for $E$, has a strongly convergent subsequence.

\begin{lemma} \label{Lemma 2.3}
$E$ satisfies the {\em \PS{c}} condition for all $c < \dfrac{s}{N}\, S^{N/sp}$.
\end{lemma}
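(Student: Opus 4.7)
The plan is to follow the standard concentration-compactness scheme for critical problems, adapted to the fractional $p$-Laplacian on $\R^N$; the compact embeddings in Lemma~\ref{Lemma 2.1}/Corollary~\ref{Corollary 2.2} will handle the subcritical weighted terms throughout. Let $\seq{u_j}$ be a \PS{c} sequence. The first step is to establish boundedness of $\seq{[u_j]_{s,\,p}}$. Forming $E(u_j) - \frac{1}{p}\,E'(u_j)u_j$ kills the kinetic and $\lambda$-contributions and leaves the positive combination $\mu\,\frac{q-p}{pq}\int b\,|u_j|^q\,dx + \frac{s}{N}\int |u_j|^{p_s^\ast}\,dx$, which equals $c + o(1) + o([u_j]_{s,\,p})$; hence both integrals are bounded by $C(1+[u_j]_{s,\,p})$. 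Feeding this into the identity $[u_j]_{s,\,p}^p = E'(u_j)u_j + \lambda\int a|u_j|^p + \mu\int b|u_j|^q + \int |u_j|^{p_s^\ast}$ and bounding the $a$-term sublinearly via H\"older, \eqref{1.2}, and the preceding $L^{p_s^\ast}$-control,
\[
\int_{\R^N} a\,|u_j|^p\, dx \;\le\; \pnorm[N/sp]{a}\, \pnorm[p_s^\ast]{u_j}^p \;\le\; C\bigl(1+[u_j]_{s,\,p}\bigr)^{p/p_s^\ast},
\]
gives $[u_j]_{s,\,p}^p \le C(1+[u_j]_{s,\,p})$, and $p>1$ forces $[u_j]_{s,\,p}$ bounded.

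Next I would extract a subsequence $u_j\wto u$ in $D^{s,\,p}(\R^N)$ with $u_j\to u$ a.e. Corollary~\ref{Corollary 2.2} passes the $a$- and $b$-parts of $E'(u_j)v$ to the limit for any fixed $v\in D^{s,\,p}(\R^N)$; combined with the standard weak-limit argument for $(-\Delta_p)^s$ based on a.e.\ convergence of the difference quotients of $u_j$, this yields $E'(u)=0$. The resulting identity
\[
E(u) = E(u)-\frac{1}{p}E'(u)u = \mu\Big(\frac{1}{p}-\frac{1}{q}\Big)\!\int_{\R^N} b|u|^q\,dx + \frac{s}{N}\int_{\R^N}|u|^{p_s^\ast}\,dx \ge 0
\]
supplies the crucial lower bound on the energy of the weak limit.

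Finally, setting $v_j := u_j-u$, I would invoke the Brezis--Lieb lemma in $L^{p_s^\ast}(\R^N)$ together with its fractional-seminorm analogue, obtaining the splittings $\pnorm[p_s^\ast]{u_j}^{p_s^\ast} = \pnorm[p_s^\ast]{u}^{p_s^\ast} + \pnorm[p_s^\ast]{v_j}^{p_s^\ast} + o(1)$ and $[u_j]_{s,\,p}^p = [u]_{s,\,p}^p + [v_j]_{s,\,p}^p + o(1)$. Combining these with $E(u_j)\to c$, $E'(u_j)u_j\to 0$, $E'(u)u=0$, and Corollary~\ref{Corollary 2.2} yields
\[
\frac{1}{p}[v_j]_{s,\,p}^p - \frac{1}{p_s^\ast}\pnorm[p_s^\ast]{v_j}^{p_s^\ast} \to c-E(u), \qquad [v_j]_{s,\,p}^p - \pnorm[p_s^\ast]{v_j}^{p_s^\ast} \to 0.
\]
The common limit $\ell$ is therefore either $0$, or, by \eqref{1.5}, at least $S^{N/sp}$ (using $p_s^\ast/(p_s^\ast-p) = N/sp$); in the latter case $c-E(u) = \frac{s}{N}\ell \ge \frac{s}{N}S^{N/sp}$, which contradicts $c<\frac{s}{N}S^{N/sp}$ together with $E(u)\ge 0$. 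Hence $\ell=0$ and $u_j\to u$ strongly in $D^{s,\,p}(\R^N)$.

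The main technical obstacle will be the fractional Brezis--Lieb splitting on $\R^{2N}$: because the carrier measure $|x-y|^{-(N+sp)}\,dxdy$ is infinite, I would need a careful Vitali/Fatou argument (though it is by now standard in the literature) and a parallel treatment of the nonlinear $(-\Delta_p)^s$ term in $E'(u_j)$ when passing to the weak limit. The boundedness step could look delicate for large $\lambda$, since the compactness of Lemma~\ref{Lemma 2.1} alone does not suffice; it is rescued cleanly by the sublinear $L^{p_s^\ast}$-bound produced in the first step.
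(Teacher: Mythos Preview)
Your proposal is correct and follows essentially the same concentration-compactness scheme as the paper's proof. The only noteworthy variations are cosmetic: for boundedness you form $E(u_j)-\tfrac{1}{p}E'(u_j)u_j$ (killing the kinetic and $a$-terms first) whereas the paper forms $E(u_j)-\tfrac{1}{q}E'(u_j)u_j$ and absorbs the $a$-term into the $|u_j|^{p_s^\ast}$-term via H\"older and Young; and you phrase the endgame as the dichotomy $\ell=0$ versus $\ell\ge S^{N/sp}$ combined with $E(u)\ge 0$, while the paper argues by contradiction from $\|\widetilde u_j\|\ge\eps_0$ and extracts the nonnegative remainder $\mu(\tfrac{1}{p}-\tfrac{1}{q})\int b|u|^q+\tfrac{s}{N}\int|u|^{p_s^\ast}$ directly from the identity $E'(u)u=0$ rather than from full criticality of $u$.
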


\begin{proof}
Let $\seq{u_j}$ be a \PS{c} sequence for $E$. Then
\begin{multline} \label{2.2}
E(u_j) = \frac{1}{p} \int_{\R^{2N}} \frac{|u_j(x) - u_j(y)|^p}{|x - y|^{N+sp}}\, dx dy - \frac{\lambda}{p} \int_{\R^N} a(x)\, |u_j|^p\, dx - \frac{\mu}{q} \int_{\R^N} b(x)\, |u_j|^q\, dx\\[5pt]
- \frac{1}{p_s^\ast} \int_{\R^N} |u_j|^{p_s^\ast}\, dx = c + \o(1)
\end{multline}
and
\begin{multline} \label{2.3}
E'(u_j)\, v = \int_{\R^{2N}} \frac{|u_j(x) - u_j(y)|^{p - 2}\, (u_j(x) - u_j(y))\, (v(x) - v(y))}{|x - y|^{N + sp}}\, dx dy\\[5pt]
- \lambda \int_{\R^N} a(x)\, |u_j|^{p - 2}\, u_j v\, dx - \mu \int_{\R^N} b(x)\, |u_j|^{q - 2}\, u_j v\, dx - \int_{\R^N} |u_j|^{p_s^\ast - 2}\, u_j v\, dx = \o(\norm{v})
\end{multline}
for all $v \in D^{s,\,p}(\R^N)$.

First we show that $\seq{u_j}$ is bounded. Taking $v = u_j$ in \eqref{2.3} gives
\begin{multline} \label{2.4}
\int_{\R^{2N}} \frac{|u_j(x) - u_j(y)|^p}{|x - y|^{N+sp}}\, dx dy - \lambda \int_{\R^N} a(x)\, |u_j|^p\, dx - \mu \int_{\R^N} b(x)\, |u_j|^q\, dx\\[5pt]
- \int_{\R^N} |u_j|^{p_s^\ast}\, dx = \o(\norm{u_j}).
\end{multline}
Dividing \eqref{2.4} by $q$ and subtracting from \eqref{2.2} gives
\begin{multline*}
\left(\frac{1}{p} - \frac{1}{q}\right) \left[\int_{\R^{2N}} \frac{|u_j(x) - u_j(y)|^p}{|x - y|^{N+sp}}\, dx dy - \lambda \int_{\R^N} a(x)\, |u_j|^p\, dx\right]\\[5pt]
+ \left(\frac{1}{q} - \frac{1}{p_s^\ast}\right) \int_{\R^N} |u_j|^{p_s^\ast}\, dx = c + \o(1 + \norm{u_j}).
\end{multline*}
This implies that $\norm{u_j}$ is bounded since $1 < p < q < p_s^\ast$,
\[
\int_{\R^N} a(x)\, |u_j|^p\, dx \le \left(\int_{\R^N} a(x)^{N/sp}\, dx\right)^{sp/N} \left(\int_{\R^N} |u_j|^{p_s^\ast}\, dx\right)^{p/p_s^\ast}
\]
by the H\"{o}lder inequality, and $a \in L^{N/sp}(\R^N)$ by \eqref{1.2}.

Since $\seq{u_j}$ is bounded, it converges weakly to some $u \in D^{s,\,p}(\R^N)$ for a renamed subsequence. For a further subsequence, $u_j \to u$ a.e. Set $\widetilde{u}_j = u_j - u$. We will show that if $\norm{\widetilde{u}_j} \ge \eps_0$ for some $\eps_0 > 0$, then $c \ge \dfrac{s}{N}\, S^{N/sp}$, contrary to assumption.

In view of Corollary \ref{Corollary 2.2}, \eqref{2.2} and \eqref{2.4} reduce to
\begin{multline} \label{2.5}
\frac{1}{p} \int_{\R^{2N}} \frac{|u_j(x) - u_j(y)|^p}{|x - y|^{N+sp}}\, dx dy - \frac{\lambda}{p} \int_{\R^N} a(x)\, |u|^p\, dx - \frac{\mu}{q} \int_{\R^N} b(x)\, |u|^q\, dx\\[5pt]
- \frac{1}{p_s^\ast} \int_{\R^N} |u_j|^{p_s^\ast}\, dx = c + \o(1)
\end{multline}
and
\begin{multline} \label{2.6}
\int_{\R^{2N}} \frac{|u_j(x) - u_j(y)|^p}{|x - y|^{N+sp}}\, dx dy - \lambda \int_{\R^N} a(x)\, |u|^p\, dx - \mu \int_{\R^N} b(x)\, |u|^q\, dx\\[5pt]
- \int_{\R^N} |u_j|^{p_s^\ast}\, dx = \o(1),
\end{multline}
respectively. Moreover, taking $v = u$ in \eqref{2.3} and passing to the limit using Corollary \ref{Corollary 2.2} gives
\begin{equation} \label{2.7}
\int_{\R^{2N}} \frac{|u(x) - u(y)|^p}{|x - y|^{N+sp}}\, dx dy - \lambda \int_{\R^N} a(x)\, |u|^p\, dx - \mu \int_{\R^N} b(x)\, |u|^q\, dx - \int_{\R^N} |u|^{p_s^\ast}\, dx = 0.
\end{equation}
We have
\[
\int_{\R^N} \left(\frac{|u_j(x) - u_j(y)|^p}{|x - y|^{N+sp}} - \frac{|u(x) - u(y)|^p}{|x - y|^{N+sp}}\right) dx dy = \int_{\R^{2N}} \frac{|\widetilde{u}_j(x) - \widetilde{u}_j(y)|^p}{|x - y|^{N+sp}}\, dx dy + \o(1)
\]
as in Perera et al.\! \cite[Lemma 3.2]{MR3458311} and
\[
\int_{\R^N} \left(|u_j|^{p_s^\ast} - |u|^{p_s^\ast}\right) dx = \int_{\R^N} |\widetilde{u}_j|^{p_s^\ast}\, dx + \o(1)
\]
by the Br\'{e}zis-Lieb lemma \cite[Theorem 1]{MR699419}, so subtracting \eqref{2.7} from \eqref{2.6} gives
\[
\int_{\R^{2N}} \frac{|\widetilde{u}_j(x) - \widetilde{u}_j(y)|^p}{|x - y|^{N+sp}}\, dx dy = \int_{\R^N} |\widetilde{u}_j|^{p_s^\ast}\, dx + \o(1).
\]
Combining this with $\eqref{1.5}$ and using $\norm{\widetilde{u}_j} \ge \eps_0$, we get
\begin{equation} \label{2.8}
\int_{\R^{2N}} \frac{|\widetilde{u}_j(x) - \widetilde{u}_j(y)|^p}{|x - y|^{N+sp}}\, dx dy \ge S^{N/sp} + \o(1).
\end{equation}
On the other hand, dividing \eqref{2.6} by $p_s^\ast$ and subtracting from \eqref{2.5} gives
\[
\frac{s}{N} \left[\int_{\R^{2N}} \frac{|u_j(x) - u_j(y)|^p}{|x - y|^{N+sp}}\, dx dy - \lambda \int_{\R^N} a(x)\, |u|^p\, dx\right] - \mu \left(\frac{1}{q} - \frac{1}{p_s^\ast}\right) \int_{\R^N} b(x)\, |u|^q\, dx = c + \o(1),
\]
and multiplying \eqref{2.7} by $s/N$ and subtracting from this gives
\[
\frac{s}{N} \int_{\R^{2N}} \frac{|\widetilde{u}_j(x) - \widetilde{u}_j(y)|^p}{|x - y|^{N+sp}}\, dx dy + \mu \left(\frac{1}{p} - \frac{1}{q}\right) \int_{\R^N} b(x)\, |u|^q\, dx + \frac{s}{N} \int_{\R^N} |u|^{p_s^\ast}\, dx = c + \o(1).
\]
Since $q > p$ and $\mu \ge 0$, this together with \eqref{2.8} implies that $c \ge \dfrac{s}{N}\, S^{N/sp}$.
\end{proof}

\subsection{Cohomological index}

We recall that the $\Z_2$-cohomological index of Fadell and Rabinowitz \cite{MR0478189} is defined as follows. Let $W$ be a Banach space and let $\A$ denote the class of symmetric subsets of $W \setminus \set{0}$. For $A \in \A$, let $\overline{A} = A/\Z_2$ be the quotient space of $A$ with each $u$ and $-u$ identified, let $f : \overline{A} \to \RP^\infty$ be the classifying map of $\overline{A}$, and let $f^\ast : H^\ast(\RP^\infty) \to H^\ast(\overline{A})$ be the induced homomorphism of the Alexander-Spanier cohomology rings. The cohomological index of $A$ is defined by
\[
i(A) = \begin{cases}
\sup \set{m \ge 1 : f^\ast(\omega^{m-1}) \ne 0} & \text{if } A \ne \emptyset\\[5pt]
0 & \text{if } A = \emptyset,
\end{cases}
\]
where $\omega \in H^1(\RP^\infty)$ is the generator of the polynomial ring $H^\ast(\RP^\infty) = \Z_2[\omega]$.

\begin{example}
The classifying map of the unit sphere $S^{m-1}$ in $\R^m,\, m \ge 1$ is the inclusion $\RP^{m-1} \incl \RP^\infty$, which induces isomorphisms on $H^q$ for $q \le m - 1$, so $i(S^{m-1}) = m$.
\end{example}

The cohomological index has the following so called piercing property, which is not shared by the Krasnoselskii’s genus.

\begin{proposition}[see {\cite[Proposition (3.9)]{MR0478189}}]
If $A, A_0, A_1 \in \A$ are closed and $\varphi : A \times [0,1] \to A_0 \cup A_1$ is a continuous map such that $\varphi(-u,t) = - \varphi(u,t)$ for all $(u,t) \in A \times [0,1]$, $\varphi(A \times [0,1])$ is closed, $\varphi(A \times \set{0}) \subset A_0$, and $\varphi(A \times \set{1}) \subset A_1$, then
\[
i(\varphi(A \times [0,1]) \cap A_0 \cap A_1) \ge i(A).
\]
\end{proposition}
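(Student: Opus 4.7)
The plan is to pull $\varphi$ back to a topological statement about a closed cover of $A \times [0,1]$ and extract a contradiction from a Mayer-Vietoris computation in the quotient spaces.

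First, I set $U_i = \varphi^{-1}(A_i)$ for $i = 0, 1$, so that $U_0, U_1$ are closed symmetric subsets of $A \times [0,1]$ with $U_0 \cup U_1 = A \times [0,1]$, $A \times \{0\} \subset U_0$, and $A \times \{1\} \subset U_1$. Let $V = U_0 \cap U_1 = \varphi^{-1}(A_0 \cap A_1)$. The restriction $\varphi|_V$ is a continuous $\Z_2$-equivariant map into $\varphi(A \times [0,1]) \cap A_0 \cap A_1$, so by monotonicity of the cohomological index it suffices to prove $i(V) \ge i(A)$.

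Since $\Z_2$ acts trivially on $[0,1]$, we have $\overline{A \times [0,1]} = \overline{A} \times [0,1]$, and I would apply Mayer-Vietoris in mod-$2$ Alexander-Spanier cohomology to the closed cover $\overline{A} \times [0,1] = \overline{U_0} \cup \overline{U_1}$ with intersection $\overline{V}$. The key observation is that each composition $A \times \{i\} \hookrightarrow U_i \hookrightarrow A \times [0,1]$ is an equivariant homotopy equivalence, so the restriction $r_i : H^\ast(\overline{A} \times [0,1]) \to H^\ast(\overline{U_i})$ is split injective, with left inverse given by further restriction to $\overline{A \times \{i\}}$ composed with the homotopy-equivalence isomorphism $H^\ast(\overline{A \times \{i\}}) \cong H^\ast(\overline{A} \times [0,1])$. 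Hence $(r_0, r_1)$ is injective in every degree, the Mayer-Vietoris connecting homomorphisms vanish, and one obtains short exact sequences
\[
0 \to H^n(\overline{A} \times [0,1]) \xrightarrow{(r_0, r_1)} H^n(\overline{U_0}) \oplus H^n(\overline{U_1}) \xrightarrow{\pi} H^n(\overline{V}) \to 0,
\]
with $\pi(\beta, \gamma) = \beta|_{\overline{V}} + \gamma|_{\overline{V}}$.

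Setting $m = i(A)$ and assuming for contradiction that $i(V) < m$, the pullback $\omega_V^{m-1}$ of $\omega^{m-1}$ to $\overline{V}$ vanishes. Then $(\omega_{U_0}^{m-1}, 0)$ lies in $\ker \pi$, so by exactness there is some $\alpha \in H^{m-1}(\overline{A} \times [0,1])$ with $r_0(\alpha) = \omega_{U_0}^{m-1}$ and $r_1(\alpha) = 0$. Pushing $r_1(\alpha) = 0$ further to $\overline{A \times \{1\}}$ via the splitting isomorphism forces $\alpha = 0$, whence $\omega_{U_0}^{m-1} = 0$. But restricting $\omega_{U_0}^{m-1}$ to $\overline{A \times \{0\}}$ recovers $\omega_A^{m-1}$, which is nonzero by the definition $i(A) = m$, a contradiction, so $i(V) \ge i(A)$.

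The main obstacle is conceptual rather than computational: one must test the Mayer-Vietoris sequence against the asymmetric class $(\omega_{U_0}^{m-1}, 0)$, since the more natural symmetric pair $(\omega_{U_0}^{m-1}, \omega_{U_1}^{m-1})$ maps to $\omega_V^{m-1} + \omega_V^{m-1} = 0$ by mod-$2$ cancellation and so reveals nothing. Verification that Alexander-Spanier Mayer-Vietoris applies to this closed cover, and that the splittings behave as claimed, is routine because all spaces involved are paracompact Hausdorff (as quotients by the free $\Z_2$ action on closed subsets of a Banach space).
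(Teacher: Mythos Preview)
The paper does not supply its own proof of this proposition: it is stated with the attribution ``see \cite[Proposition (3.9)]{MR0478189}'' and used as a black box, so there is no in-paper argument to compare against.

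Your argument is correct and is essentially the classical Fadell--Rabinowitz proof. The reduction to showing $i(V) \ge i(A)$ via monotonicity under the equivariant map $\varphi|_V$ is standard. The Mayer--Vietoris step is sound: Alexander--Spanier cohomology admits a Mayer--Vietoris sequence for closed covers of paracompact Hausdorff spaces, and the quotients $\overline{U_i}$, $\overline{V}$, $\overline{A}\times[0,1]$ are such spaces. Your observation that each $r_i$ is split injective (via further restriction to the slice $\overline{A\times\{i\}}$) is exactly what forces the connecting maps to vanish, yielding the short exact sequences you write down. The asymmetric test class $(\omega_{U_0}^{m-1},0)$ is the right device: from $\omega_V^{m-1}=0$ you get a preimage $\alpha$, the splitting on the $U_1$ side kills $\alpha$, and naturality of the $\omega$-classes under restriction to $\overline{A\times\{0\}}$ delivers the contradiction $\omega_A^{m-1}=0$.

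One small point worth making explicit in a write-up: the equality $\omega_{U_0}^{m-1}\big|_{\overline V} = \omega_V^{m-1}$ (and similarly for the restriction to $\overline{A\times\{0\}}$) relies on the naturality of classifying maps up to homotopy, which you are implicitly using; this is routine but deserves a sentence.
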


\subsection{An abstract critical point theorem}

Let $W$ be a Banach space and let $E \in C^1(W,\R)$ be an even functional, i.e., $E(-u) = E(u)$ for all $u \in W$. Assume that there exists $c^\ast > 0$ such that $E$ satisfies the \PS{c} condition for all $c \in (0,c^\ast)$. Let $\A^\ast$ denote the class of symmetric subsets of $W$ and let $\Gamma$ denote the group of odd homeomorphisms of $W$ that are the identity outside the set $\set{u \in W : 0 < E(u) < c^\ast}$. For $\rho > 0$, the pseudo-index of $M \in \A^\ast$ related to $i$, $S_\rho = \set{u \in W : \norm{u} = \rho}$, and $\Gamma$ is defined by
\[
i^\ast(M) = \min_{\gamma \in \Gamma}\, i(\gamma(M) \cap S_\rho)
\]
(see Benci \cite{MR84c:58014}). Making essential use of the piercing property of the cohomological index, the following critical point theorem was proved in Yang and Perera \cite{MR3616328} (see also Perera and Szulkin \cite{MR2153141} and Perera et al.\! \cite[Proposition 3.44]{MR2640827}).

\begin{theorem}[see {\cite[Theorem 2.4]{MR3616328}}] \label{Theorem 2.6}
Let $A_0$ and $B_0$ be symmetric subsets of the unit sphere $S = \set{u \in W : \norm{u} = 1}$ such that $A_0$ is compact, $B_0$ is closed, and
\[
i(A_0) \ge k + m - 1, \qquad i(S \setminus B_0) \le k - 1
\]
for some $k, m \ge 1$. Assume that there exists $R > \rho$ such that
\begin{equation} \label{2.9}
\sup_{u \in A}\, E(u) \le 0 < \inf_{u \in B}\, E(u), \qquad \sup_{u \in X}\, E(u) < c^\ast,
\end{equation}
where $A = \set{Ru : u \in A_0}$, $B = \set{\rho u : u \in B_0}$, and $X = \set{tu : u \in A_0,\, 0 \le t \le R}$. For $j = k,\dots,k + m - 1$, let $\A_j^\ast = \set{M \in \A^\ast : M \text{ is compact and } i^\ast(M) \ge j}$ and set
\[
c_j^\ast := \inf_{M \in \A_j^\ast}\, \max_{u \in M}\, E(u).
\]
Then
\[
\inf_{u \in B}\, E(u) \le c_k^\ast \le \dotsb \le c_{k+m-1}^\ast \le \sup_{u \in X}\, E(u),
\]
each $c_j^\ast$ is a critical value of $E$, and there are $m$ distinct pairs of associated critical points.
\end{theorem}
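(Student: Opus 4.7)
The plan is a standard pseudo-index minimax argument. I would split the proof into three pieces: (i) showing that the cone $X$ has pseudo-index at least $k+m-1$, which makes each $\A_j^\ast$ non-empty for $j \le k+m-1$ and yields $c_{k+m-1}^\ast \le \sup_X E$; (ii) showing via the linking condition that every $M \in \A_k^\ast$ meets $B$, giving $c_k^\ast \ge \inf_B E$; and (iii) promoting each $c_j^\ast$ to a critical value with the correct multiplicity via the equivariant deformation lemma.

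For (i), fix an arbitrary $\gamma \in \Gamma$. Since $\gamma$ is odd, $\gamma(0) = 0$, so by compactness of $A_0$ I can pick $\eps > 0$ small enough that $\norm{\gamma(\eps u)} < \rho$ for every $u \in A_0$. The assumption $\sup_A E \le 0 < c^\ast$ from \eqref{2.9} forces $\gamma \equiv \mathrm{id}$ on $A$, so $\norm{\gamma(Ru)} = R > \rho$. I would then define the continuous, odd map $\varphi : A_0 \times [0,1] \to W \setminus \set{0}$ by
\[
\varphi(u,t) = \gamma\bigl((\eps + t(R-\eps))\, u\bigr),
\]
and split its compact image $Y$ by the closed symmetric cover $Y \cap \set{\norm{\cdot} \le \rho}$ and $Y \cap \set{\norm{\cdot} \ge \rho}$. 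The piercing property of $i$ then gives
\[
i(\gamma(X) \cap S_\rho) \ge i(Y \cap S_\rho) \ge i(A_0) \ge k+m-1,
\]
and infimizing over $\gamma$ yields $i^\ast(X) \ge k+m-1$. Combined with the trivial monotonicity $\A_{k+m-1}^\ast \subset \dots \subset \A_k^\ast$, this produces $c_{k+m-1}^\ast \le \sup_X E$ and the chain $c_k^\ast \le \dots \le c_{k+m-1}^\ast$.

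For (ii), take any $M \in \A_k^\ast$ and use $\gamma = \mathrm{id}$ in the definition of the pseudo-index to obtain $i(M \cap S_\rho) \ge k$. The dilation $u \mapsto \rho u$ is an odd homeomorphism $S \setminus B_0 \to S_\rho \setminus B$, so $i(S_\rho \setminus B) = i(S \setminus B_0) \le k-1$. Monotonicity of $i$ then forces $M \cap S_\rho \not\subset S_\rho \setminus B$, so $M \cap B \neq \emptyset$ and $\max_M E \ge \inf_B E$; taking the infimum over $M$ closes off $\inf_B E \le c_k^\ast \le \dots \le c_{k+m-1}^\ast \le \sup_X E$.

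For (iii), I would argue by contradiction using the standard equivariant deformation lemma, which applies because $E$ is $C^1$, even, and satisfies \PS{c} for every $c \in (0, c^\ast)$. If $c = c_j^\ast = \dots = c_{j+l-1}^\ast$ but $i(K_c) < l$, one selects a closed symmetric neighborhood $U$ of $K_c$ with $i(\overline{U}) < l$ and an odd $\eta \in \Gamma$ with $\eta(\set{E \le c+\eps} \setminus U) \subset \set{E \le c-\eps}$ for some $\eps > 0$. For $M \in \A_{j+l-1}^\ast$ almost realizing $c_{j+l-1}^\ast$, setting $M' = \overline{\eta(M \setminus U)}$, the invariance $i^\ast(\eta(\cdot)) \ge i^\ast(\cdot)$ (which follows from closure of $\Gamma$ under composition) together with the subadditivity $i^\ast(M) \le i^\ast(M \setminus U) + i(\overline{U})$ delivers $M' \in \A_j^\ast$ while $\max_{M'} E \le c-\eps$, contradicting $c_j^\ast = c$. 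Hence every $c_j^\ast$ is critical, and any repeated level $c_j^\ast = \dots = c_{j+l-1}^\ast$ carries a critical set of cohomological index $\ge l$, which therefore contains at least $l$ distinct antipodal pairs; summing over the repeated blocks produces the required $m$ pairs. The main obstacle is step (i): an arbitrary $\gamma \in \Gamma$ may distort the interior of $X$ in unpredictable ways, and it is the piercing property of the $\Z_2$-cohomological index (unavailable for the Krasnoselskii genus) that prevents $\gamma(X) \cap S_\rho$ from shrinking cohomologically below the size of $A_0$.
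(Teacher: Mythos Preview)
The paper does not supply its own proof of Theorem~\ref{Theorem 2.6}; it is quoted verbatim from Yang and Perera \cite[Theorem 2.4]{MR3616328} and used as a black box. So there is nothing in the present paper to compare your argument against.

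That said, your sketch is the standard pseudo-index argument and is essentially correct. Part (i) is the heart of the matter, and your use of the piercing property is exactly the intended mechanism: the map $\varphi(u,t)=\gamma((\eps+t(R-\eps))u)$ is odd in $u$, has compact (hence closed) image avoiding $0$, and sends $A_0\times\{0\}$ into $\{\|\cdot\|<\rho\}$ and $A_0\times\{1\}$ into $\{\|\cdot\|>\rho\}$ because $\gamma$ is the identity on $A$; the piercing property then forces $i(\gamma(X)\cap S_\rho)\ge i(A_0)$. Parts (ii) and (iii) are routine. Two minor clean-ups: in (iii) it is cleaner to take $U$ an \emph{open} symmetric neighborhood of $K_c$ with $i(\overline{U})\le l-1$, so that $M\setminus U$ is automatically compact and no closure is needed until after applying $\eta$; and you should state explicitly that the deformation $\eta$ produced by the equivariant deformation lemma can be taken to be the identity outside $\{c-2\eps<E<c+2\eps\}\subset\{0<E<c^\ast\}$, which is what guarantees $\eta\in\Gamma$ and hence the supermonotonicity $i^\ast(\eta(\cdot))\ge i^\ast(\cdot)$.
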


We will use this theorem to prove our multiplicity results.

\subsection{Eigenvalue problem}

We recall the construction of variational eigenvalues based on the cohomological index for the eigenvalue problem \eqref{1.4}, which can be written as
\[
\Ap[u] = \lambda \Bp[u]
\]
in the dual $D^{s,\,p}(\R^N)^\ast$ of $D^{s,\,p}(\R^N)$, where $\Ap, \Bp \in C(D^{s,\,p}(\R^N),D^{s,\,p}(\R^N)^\ast)$ are the operators given by
\begin{eqnarray*}
&&\dualp{\Ap[u]}{v} = \int_{\R^{2N}} \frac{|u(x) - u(y)|^{p - 2}\, (u(x) - u(y))\, (v(x) - v(y))}{|x - y|^{N + sp}}\, dx dy,\\
&&\dualp{\Bp[u]}{v} = \int_{\R^N} a(x)\, |u|^{p - 2}\, uv\, dx, \quad u, v \in D^{s,\,p}(\R^N).
\end{eqnarray*}
Since $D^{s,\,p}(\R^N)$ is uniformly convex and
\[
\dualp{\Ap[u]}{v} \le \norm{u}^{p-1} \norm{v}, \quad \dualp{\Ap[u]}{u} = \norm{u}^p \quad \forall u, v \in D^{s,\,p}(\R^N)
\]
by the H\"{o}lder inequality, it follows from Perera et al.\! \cite[Proposition 1.3]{MR2640827} that $\Ap$ is of type (S), i.e., every sequence $\seq{u_j} \subset D^{s,\,p}(\R^N)$ such that $u_j \wto u$ and $\dualp{\Ap[u_j]}{u_j - u} \to 0$ has a subsequence that converges strongly to $u$. The operator $\Bp$ is compact by Lemma \ref{Lemma 2.1}.

Let
\[
I(u) = \int_{\R^{2N}} \frac{|u(x) - u(y)|^p}{|x - y|^{N+sp}}\, dx dy, \quad J(u) = \int_{\R^N} a(x)\, |u|^p\, dx, \quad u \in D^{s,\,p}(\R^N),
\]
let $S = \set{u \in D^{s,\,p}(\R^N) : \norm{u} = 1}$ be the unit sphere in $D^{s,\,p}(\R^N)$,
and let
\[
S^+ = \set{u \in S : J(u) > 0}.
\]
Then $S^+$ is a symmetric open submanifold of $S$ and eigenvalues of problem \eqref{1.4} coincide with critical values of the $C^1$-functional
\[
\Psi(u) = \frac{1}{J(u)}, \quad u \in S^+
\]
(see \cite[Chapter 9]{MR2640827}). Let $\F$ denote the class of symmetric subsets of $S^+$ and let $i(M)$ be the cohomological index of $M \in \F$. The following theorem was proved in \cite{MR2640827}.

\begin{theorem}[see {\cite[Theorem 9.2]{MR2640827}}] \label{Theorem 2.7}
For $k \ge 1$, let $\F_k = \set{M \in \F : i(M) \ge k}$ and set
\[
\lambda_k := \inf_{M \in \F_k}\, \sup_{u \in M}\, \Psi(u).
\]
Then $\lambda_k \nearrow \infty$ is a sequence of eigenvalues of problem \eqref{1.4}.
\begin{enumroman}
\item \label{Theorem 2.7.i} The first eigenvalue is given by
    \[
    \lambda_1 = \min_{u \in S^+}\, \Psi(u) > 0.
    \]
\item \label{Theorem 2.7.ii} If $\lambda_k = \dotsb = \lambda_{k+m-1} = \lambda$ and $E_\lambda$ is the set of eigenfunctions associated with $\lambda$ that lie on $S^+$, then
    \[
    i(E_\lambda) \ge m.
    \]
\item \label{Theorem 2.7.iii} If $\lambda_{k-1} < \lambda < \lambda_k$, then
    \[
    i(\Psi^{\lambda_{k-1}}) = i(S^+ \setminus \Psi_\lambda) = i(\Psi^\lambda) = i(S^+ \setminus \Psi_{\lambda_k}) = k - 1,
    \]
    where $\Psi^a = \set{u \in S^+ : \Psi(u) \le a}$ and $\Psi_a = \set{u \in S^+ : \Psi(u) \ge a}$ for $a \in \R$.
\end{enumroman}
\end{theorem}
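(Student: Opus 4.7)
The plan is to verify that the abstract Ljusternik--Schnirelmann framework underlying \cite[Theorem 9.2]{MR2640827} applies in the present setting, after which all three parts follow. The essential items to check are: (a) $\Psi$ is an even $C^1$-functional on the symmetric $C^1$-Finsler manifold $S^+ \subset D^{s,p}(\R^N)$, bounded below by a positive constant, and (b) $\Psi$ satisfies the Palais-Smale condition on $S^+$. Item (a) is immediate: $J$ is $C^1$ on $D^{s,p}(\R^N)$, $S$ is a $C^1$-submanifold of the uniformly convex reflexive space $D^{s,p}(\R^N)$, and $S^+$ is open in $S$; the H\"older and Sobolev inequalities together with \eqref{1.2} give $J(u) \le \pnorm[N/sp]{a}/S$ for every $u \in S$, so that $\Psi \ge S/\pnorm[N/sp]{a} > 0$ on $S^+$, which already proves the positivity claim in \ref{Theorem 2.7.i}.

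The technical core is (b). Let $\seq{u_j} \subset S^+$ be a PS sequence, i.e., $\Psi(u_j) \to c$ and $\Psi'_{|S^+}(u_j) \to 0$. A Lagrange multiplier computation on $S$ produces bounded scalars $\mu_j$ (with $\mu_j \to c$) such that $\Ap[u_j] - \mu_j \Bp[u_j] \to 0$ in the dual of $D^{s,p}(\R^N)$. Since $\seq{u_j} \subset S$ is bounded and $D^{s,p}(\R^N)$ is reflexive, a renamed subsequence satisfies $u_j \wto u$ with $u_j \to u$ a.e. Lemma \ref{Lemma 2.1} then gives $\Bp[u_j] \to \Bp[u]$ strongly in the dual, so that $\Ap[u_j]$ also converges strongly; testing with $u_j - u$ yields $\dualp{\Ap[u_j]}{u_j - u} \to 0$, and the $(S)$-property of $\Ap$ forces $u_j \to u$ strongly in $D^{s,p}(\R^N)$. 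The principal obstacle of the proof is precisely arranging this verification on a homogeneous Sobolev space where standard Rellich-type compactness on $\R^N$ is absent; Lemma \ref{Lemma 2.1}, reflexivity, and the $(S)$-property together permit the usual machinery to run.

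With (a) and (b) secured, the remaining conclusions follow from the abstract theory. Part \ref{Theorem 2.7.i} is direct minimization: any minimizing sequence is a PS sequence and hence converges strongly to a minimizer. Monotonicity of the family $\F_k$ under $k \mapsto k+1$ gives $\lambda_k \le \lambda_{k+1}$, and the divergence $\lambda_k \nearrow \infty$ follows since, by PS and the compactness of $\Bp$, each sublevel $\Psi^L$ with $L < \infty$ has finite cohomological index. For each $\lambda_k$ the equivariant deformation lemma shows it is a critical value, as otherwise an odd deformation would push $\Psi^{\lambda_k + \eps}$ into $\Psi^{\lambda_k - \eps}$, contradicting the minimax definition via monotonicity of $i$. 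Part \ref{Theorem 2.7.ii} follows from the continuity and subadditivity of $i$: if $i(E_\lambda) < m$, a symmetric closed tubular neighborhood $U$ of $E_\lambda$ with $i(\overline{U}) < m$ can be deformed out of $\Psi^{\lambda + \eps}$, producing a set of index at least $k + m - 1$ contained in $\Psi^{\lambda - \eps}$ and contradicting $\lambda_{k+m-1} = \lambda$. Finally, part \ref{Theorem 2.7.iii} follows by applying the deformation lemma between the noncritical levels: the four sets $\Psi^{\lambda_{k-1}}$, $S^+ \setminus \Psi_\lambda$, $\Psi^\lambda$, and $S^+ \setminus \Psi_{\lambda_k}$ are pairwise equivariantly homotopy equivalent and hence share the same cohomological index, which is forced to equal $k - 1$ by the definitions of $\lambda_{k-1}$ and $\lambda_k$ as minimax levels.
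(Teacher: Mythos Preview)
Your proposal is correct and mirrors the paper's treatment: the paper does not give an independent proof of Theorem~\ref{Theorem 2.7} but simply verifies, in the paragraph preceding the statement, that $\Ap$ is of type~(S) (via \cite[Proposition~1.3]{MR2640827}) and that $\Bp$ is compact (via Lemma~\ref{Lemma 2.1}), and then invokes \cite[Theorem~9.2]{MR2640827} as a black box. Your write-up follows the same route---check the structural hypotheses, then appeal to the abstract Ljusternik--Schnirelmann machinery---only with the Palais--Smale verification and the deformation-lemma arguments for \ref{Theorem 2.7.i}--\ref{Theorem 2.7.iii} spelled out rather than cited; nothing substantively different is happening.
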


\subsection{A regularity result}

We have the following regularity result.

\begin{theorem} \label{Theorem 2.8}
Let $u\in D^{s,p}(\R^N)$ be a solution of the fractional $p$-Laplacian equation
\begin{equation}\label{Eq-1}
(-\Delta_p)^s u=f(x,u),
\end{equation}
where $f$ satisfies the condition:
\begin{equation}\label{Eq-2}
|f(x,t)|\le b_2(x)|t|^{p-1}+b_3(x),
\end{equation}
with coefficient functions $b_2$ and $b_3$ that are supposed to satisfy
the following hypotheses:
\begin{itemize}
\item[(H1)] $b_2(x)=m(x)+n(x);\, m\in L^{t_0}(\R^N)$, for some $t_0>\frac{N}{ps}$, and \, $n\in L^{\frac{N}{ps}}(\R^N)$,
\item [(H2)] $b_3\in L^{\frac{\beta}{p-1}}(\R^N)\cap L^\infty(\R^N)$, for some $\beta > p$.
\end{itemize}
Suppose $u\in L^\beta(\R^N)$ and $r\in (p^{*},\infty)$. There exists $\epsilon_0=\epsilon_0 (r,p,s,N)$ such that
$$
\|n\|_{\frac{N}{ps}}\leq \epsilon_0\Longrightarrow\|u\|_{r}\le C \max\left\{\|u\|_{\beta}, \|u\|_{\beta}^{\theta_0}\right\},
$$
for some $\theta_0=\theta_0(p,\beta,N,s)$ with $0<\theta_0\le 1$ and $C=C\left(p,N,s, \|m\|_{t_0}, \|b_3\|\right)$, with $\|b_3\|:=\|b_3\|_{\frac{\beta}{p-1}}+\|b_3\|_{\infty}$.
 
In particular if $n(x)=0$, then $u\in L^{p^{*}}(\R^N)\cap L^\infty(\R^N)$ and the estimate 
\begin{equation}\label{Eq-3}
\|u\|_{r}\le C \max\left\{\|u\|_{\beta}, \|u\|_{\beta}^{\theta_0}\right\},
\end{equation}
is valid for all $p^{*}\leq r\leq \infty$.
\end{theorem}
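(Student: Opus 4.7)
The plan is to prove Theorem \ref{Theorem 2.8} by a Moser-type iteration adapted to the fractional $p$-Laplacian, with the splitting $b_2 = m + n$ providing the small-parameter mechanism. Fix $\gamma \ge 1$ and $K > 0$ and set $G_K(t) := \min(|t|, K)^{\gamma - 1}\, \mathrm{sign}(t)$. I would test the weak formulation of \eqref{Eq-1} against $\varphi_K := u\,|G_K(u)|^{p}$; since $G_K$ is bounded and globally Lipschitz in $u$, $\varphi_K \in D^{s,p}(\R^N)$, so the test is legitimate.

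The lower bound on the left-hand side comes from a pointwise algebraic inequality (of Brasco--Lindgren--Schikorra / Iannizzotto--Mosconi--Squassina type): there exists $c_p > 0$ independent of $K$ and $\gamma$ such that
\begin{equation*}
|a-b|^{p-2}(a-b)\bigl(a|G_K(a)|^{p} - b|G_K(b)|^{p}\bigr) \;\ge\; c_p\,\bigl|\,a\,G_K(a) - b\,G_K(b)\,\bigr|^{p} \qquad \forall a,b \in \R.
\end{equation*}
Setting $w_K := u\, G_K(u)$ and integrating against $|x-y|^{-(N+sp)}$ on $\R^{2N}$ yields $c_p\,[w_K]_{s,p}^p \le \int_{\R^N} f(x,u)\,\varphi_K\,dx$. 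Using (H1), (H2), I split the right-hand side into three pieces: (a) the $n$-term, bounded by $\pnorm[N/sp]{n}\, S^{-1}\,[w_K]_{s,p}^p$ via H\"older with exponents $N/(sp)$ and $N/(N-sp)$ together with \eqref{1.5}; (b) the $m$-term, controlled by H\"older with exponent $t_0$, with the gap $t_0 > N/(sp)$ filled by interpolation between $L^{p_s^\ast}$ and a lower Lebesgue exponent; (c) the $b_3$-term, bounded via the $L^{\beta/(p-1)}\cap L^\infty$ assumption paired with the a priori $L^\beta$ bound on $u$.

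Setting $\epsilon_0 := c_p\, S / 2$ at the first iteration stage and imposing $\pnorm[N/sp]{n} \le \epsilon_0$ absorbs (a) into the LHS, leaving $[w_K]_{s,p}^p \le C\bigl(\pnorm[p\gamma]{u}^{p\gamma} + \text{lower-order terms in } \pnorm[\beta]{u}\bigr)$ with $C$ depending only on $p, N, s, \pnorm[t_0]{m}, \|b_3\|$. Letting $K \nearrow \infty$ by Fatou and applying \eqref{1.5} once more controls $\pnorm[\gamma\, p_s^\ast]{u}^{\gamma p}$ by the same right-hand side, producing the reverse-H\"older gain from $L^{p\gamma}$ to $L^{\gamma p_s^\ast}$. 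Iterating with $\gamma_0 := \beta/p > 1$ and $\gamma_{k+1} = (p_s^\ast/p)\gamma_k$ drives $p\gamma_k \to \infty$, and summing the geometric series in the exponents yields $\pnorm[r]{u} \le C \max\{\pnorm[\beta]{u}, \pnorm[\beta]{u}^{\theta_0}\}$ for any prescribed $r > p_s^\ast$, with $\theta_0 \in (0,1]$ determined by the scaling of the iterated inequalities. The dependence $\epsilon_0 = \epsilon_0(r,p,s,N)$ reflects that later iteration stages may require tightening the smallness of $\pnorm[N/sp]{n}$.

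The main obstacles will be (i) establishing the truncated algebraic inequality above with constants independent of $K$ and $\gamma$, so that the $K \nearrow \infty$ passage is uniform along the iteration, and (ii) bookkeeping the $b_3$-term across iteration levels---this is precisely why $b_3 \in L^{\beta/(p-1)} \cap L^\infty$ is assumed, the $L^{\beta/(p-1)}$ norm handling the initial stage and the $L^\infty$ bound the high-exponent stages. In the special case $n \equiv 0$, no smallness condition is needed and the iteration runs unconditionally; a standard De Giorgi / Ladyzhenskaya-type supremum argument then upgrades the $L^r$ estimates for finite $r$ to $u \in L^\infty(\R^N)$ and extends \eqref{Eq-3} to $r = \infty$.
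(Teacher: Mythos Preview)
Your proposal follows essentially the same Moser-iteration scheme as the paper: truncated power test functions, the Iannizzotto--Mosconi--Squassina pointwise inequality, the three-way split of the right-hand side with absorption of the $n$-term, interpolation for the $m$-term, and geometric iteration $\beta_k=\chi^k\beta$ with $\chi=N/(N-sp)$.

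One point needs correction. You assert the algebraic inequality with a constant $c_p$ \emph{independent of $\gamma$}; this is false. Testing at $a=1+\varepsilon$, $b=1$ shows the best constant in your inequality behaves like $(1+p\gamma)\gamma^{-p}\to 0$ as $\gamma\to\infty$. Equivalently, the paper records it as
\[
\bigl|a_L^{1+\gamma}-b_L^{1+\gamma}\bigr|^p \le C(1+p\gamma)^{p-1}\,(a-b)^{p-1}\bigl(a_L^{1+p\gamma}-b_L^{1+p\gamma}\bigr),
\]
with the factor $(1+p\gamma)^{p-1}$ explicit. This $\gamma$-dependence propagates to the absorption step: at the $k$-th stage ($\beta_k=\chi^k\beta$) one needs $\|n\|_{N/ps}\lesssim (\chi^{k-1}\beta)^{-(p-1)}$, which is exactly why $\epsilon_0$ depends on the target exponent $r$ (and why no smallness is needed when $n\equiv 0$). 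You already flag this dependence heuristically, but the mechanism is the growing constant in the algebraic inequality, not an unspecified ``tightening''. Once you track this factor, your outline matches the paper's proof, including the passage to $r=\infty$ when $n\equiv 0$, which the paper obtains simply by observing that the iterated bound is uniform in $k$ rather than by a separate De Giorgi argument.
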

\begin{proof}For $a\in\R$ and $\gamma>0,\, L>1$ we introduce the functions
$$
a^{1+\gamma}:=a|a|^{\gamma}\,\quad\quad a_L^{1+\gamma}=a\min\{|a|,L\}^\gamma.
$$
Note that this definition satisfies the following simple properties, which will be freely used throughout the presentation below.\begin{equation}\label{Eq-3.5}
|a_L^{1+\gamma}|=|a|_L^{1+\gamma},\,\quad\quad |a_L^{1+\gamma}|\leq |a|^{1+\gamma},\, \quad\quad |a|_L^{(1+\gamma_1)(1+\gamma_2)}\leq (|a|_L^{1+\gamma_1})^{1+\gamma_2}.
\end{equation}
Next we recall the following inequality (see Iannizzotto et al.\! \cite[Lemma 2.3]{MR4012292});
$$
\left|a^{1+\gamma}-b^{1+\gamma}\right|^p\le C(1+p\gamma)^{p-1}(a-b)^{p-1}\left(a^{1+p\gamma}-b^{1+p\gamma}\right),
$$
By considering different possibilities as to the location of $a$ and $b$ relative to $L$ on the real line, one can easily verify the following related inequality 
\begin{equation}\label{Eq-4}
\left|a_L^{1+\gamma}-b_L^{1+\gamma}\right|^p\le C(1+p\gamma)^{p-1}(a-b)^{p-1}\left(a_L^{1+p\gamma}-b_L^{1+p\gamma}\right),
\end{equation}
from which it follows 
\begin{equation}\label{Eq-5}
\left|u_L^{1+\gamma}(x)-u_L^{1+\gamma}(y)\right|^p\le C(L,p,\gamma)|u(x)-u(y)|^p.
\end{equation}
Substituting $a=u(x)$ and $b=u(y)$ in inequalities (\ref{Eq-4}) and (\ref{Eq-5}), integrating in $\R^N$ using the measure $d\mu=\frac{1}{|x-y|^{N+sp}}dxdy$ we conclude that $u_L^{1+\gamma}\in D^{s,p}(\R^N)$ and 
\begin{equation}\label{Eq-6}
\left[u_L^{1+\gamma}\right]^p_{s,p}\le C(p)(1+p\gamma)^{p-1}\left\langle (-\Delta_p)^su,u_L^{1+p\gamma}\right\rangle
\end{equation}
Next let $\alpha=\beta-(p-1)>1$, and $L>1$. Applying inequality (\ref{Eq-6}) with $\gamma=\frac{\alpha-1}{p}$, using (\ref{Eq-2}), (\ref{Eq-3.5}) and the fractional Sobolev-Gagliardo inequality, and noting that $1+\gamma =\frac{\beta}{p}$, we have
\begin{eqnarray}
\|u_L^{\frac{\beta}{p}}\|_{p^{*}}^p &\leq & C(s,p)\beta^{p-1} \int_{\R^N}|f(x,u)||u|^p\min\{|u|,L\}^{\alpha -1}\,dx\nonumber\\
& \leq & C(s,p)\beta^{p-1}\left[\int_{\R^N}m(x)(|u_L|^{\frac{\beta}{p}})^p+ \int_{\R^N}n(x))(|u_L|^{\frac{\beta}{p}})^p+ \int_{\R^N}b_3(x)|u|^{\alpha}\right]\label{Eq-7}
\end{eqnarray}
We now estimate the integrals on the right-hand side of (\ref{Eq-7}). We recall that by hypotheses (H1), $t_0>\frac{N}{ps}$ and thus $p< \frac{pt_0}{t_0-1}<p^*=\frac{Np}{N-sp}$. Hence, by interpolation along with Young's inequality we obtain the following estimate for the first integral on the right-hand side of (\ref{Eq-7}):
\begin{eqnarray}
I_1&=&\int_{\R^N}m(x)(|u_L|^{\frac{\beta}{p}})^p\,dx \le \|m\|_{t_0}\left( \int_{\R^N} (|u_L|^{\frac{\beta}{p}})^{\frac{pt_0}{t_0-1}} \right) ^{\frac{t_0-1}{t_0}}\nonumber \\
&\le& \|m\|_{t_0}\left(\varepsilon \|u_L^{\frac{\beta}{p}}\|_{p^{*}}^{p} +
C(\varepsilon) \|u_L^{\frac{\beta}{p}}\|_{p}^{p}\right),\label{Eq-8}
\end{eqnarray}
where $\varepsilon$ is arbitrary and $C(\varepsilon)=\epsilon^{\frac{-N}{pst_0-N}}$. Next using Holder inequality we estimate the next two integrals on the right-hand side of (\ref{Eq-7}) as follows
\begin{equation}\label{Eq-9}
 I_2=\int_{\R^N}n(x) (|u_L|^{\frac{\beta}{p}})^p\,dx\leq \|n\|_{\frac{N}{ps}}\|u_L^{\frac{\beta}{p}}\|_{p^{*}}^{p}
\end{equation}
\begin{equation}\label{Eq-10}
 I_3=\int_{\R^N}b_3(x) |u|^{\alpha}\,dx\leq \|b_3\|_{\frac{\beta}{p-1}}\left(\int_{\R^N}|u|^\beta dx\right)^{\frac{\alpha}{\beta}}
\end{equation}
Now taking $\varepsilon=\frac{1}{4C(s,p)\beta^{p-1}\|m\|_{t_0}}$ in (\ref{Eq-8}) and imposing the condition
\begin{equation}\label{Eq-11}
\|n\|_{\frac{N}{ps}}\leq \frac{1}{4C(s,p)\beta^{p-1}}
\end{equation}
on the coefficient function $n(x)$, in view of (\ref{Eq-7})-(\ref{Eq-11}) we obtain
\begin{equation*}
\|u_L^{\frac{\beta}{p}}\|_{p^{*}}^{p}\leq \tilde{C}(p,s)\beta^{(p-1)\frac{pt_0s}{pt_0s-N}}\|m\|_{t_0}\|u\|_{\beta}^{\beta}+C(s,p)\beta^{p-1} \|b_3\|\|u\|_\beta^\alpha.
\end{equation*}
Since by (\ref{Eq-3.5}) 
$$
\left(|u_L(x)|^{\frac{\beta}{p}}\right)^{p^{*}}\geq |u_L(x)|^{\frac{\beta}{p}p^{*}}=|u_L(x)|^{\frac{\beta N}{N-ps}}
$$
we conclude
\begin{equation*}
\|u_L\|_{\frac{\beta N}{N-sp}}\leq (C \beta)^{\frac{\sigma}{\beta}}\max\left\{\|u\|_\beta, \|u\|_\beta^{1-\frac{p-1}{\beta}}\right\},
\end{equation*}
Letting $L\to\infty$, we finally arrive at the following inequality:
\begin{equation}\label{Eq-12}
\|u\|_{\frac{\beta N}{N-sp}}\leq (C \beta)^{\frac{\sigma}{\beta}}\max\left\{\|u\|_\beta, \|u\|_\beta^{1-\frac{p-1}{\beta}}\right\},
\end{equation}
where $C=C(p, s,N, \|m\|_{t_0},\|b_3\|)$ and $\sigma=\sigma(p,s,N,t_0)>1$.

It is now clear that one may iterate inequality (\ref{Eq-12}) $k$ times by taking $\beta_0=\beta,\, \, \beta_k=\chi^k\beta\, $, with $\chi=\frac{N}{N-sp}$, if
$$
\|n\|_{\frac{N}{ps}}\leq \frac{1}{4C(s,p)(\chi^{k-1}\beta)^{p-1}}
$$
and obtain
$$ |u|_{\chi^k\beta}\leq (C\chi\beta)^{\tilde{\sigma}}\max\left\{\|u\|_{\beta},|u|_{\beta}^{\theta(k)}\right\},
$$
where $\tilde{\sigma}= \frac{\sigma}{\beta}(1+\sum_{m=1}^{\infty} \frac{m}{\chi^m})$ and
$$\theta(k)=\prod_{i=0}^{k-1} R(i),\quad\quad R(i)=1\, \, \mbox{ or }\, \, R(i)=1-\frac{p-1}{\chi^i \beta}, 
$$
Note that
$$0< \theta_0=\theta_0(p,\beta, N) :=\prod_{i=0}^{\infty}(1-\frac{p-1}{\chi^i \beta})\leq \theta(k)\leq 1
$$
since $\sum_{i=0}^{\infty} \frac{p-1}{\chi^i \beta}<\infty$. Hence given $r\in (p^{*},\infty)$, we let $k\in \N$ be such that
$\chi^{k-1}\beta\leq r<\chi{k}\beta $, then the above argument implies that 
$$
\|n\|_{\frac{N}{ps}}\leq \epsilon_0:=\frac{1}{4C(s,p)(\chi^{k-1}\beta)^{p-1}}\Longrightarrow\|u\|_{r}\le C \max\left\{\|u\|_{\beta}, \|u\|_{\beta}^{\theta_0}\right\},
$$
with $C$ and $\theta_0$ as in the statement of the theorem. Finally the last statement of the theorem easily follows as we will have 
$$\|u\|_{r}\le C \max\left\{\|u\|_{\beta}, \|u\|_{\beta}^{\theta_0}\right\},
$$ 
for all $p^{*}\leq r$, with $C$ and $\theta_0$ independent of $r$, and therefore for $r=\infty$ as well. The proof of the theorem is now complete.
\end{proof}

\section{Proofs of Theorem \ref{Theorem 1.1}, Theorem \ref{Theorem 1.3}, and Corollary \ref{Corollary 1.4}} \label{Section 3}

We prove Theorem \ref{Theorem 1.1} and Theorem \ref{Theorem 1.3} by applying Theorem \ref{Theorem 2.6} to the functional
\[
E(u) = \frac{1}{p}\, \big(I(u) - \lambda\, J(u)\big) - \frac{\mu}{q} \int_{\R^N} b(x)\, |u|^q\, dx - \frac{1}{p_s^\ast} \int_{\R^N} |u|^{p_s^\ast}\, dx, \quad u \in D^{s,\,p}(\R^N),
\]
taking
\[
c^\ast = \frac{s}{N}\, S^{N/sp}
\]
(see Lemma \ref{Lemma 2.3}).

\subsection{Proof of Theorem \ref{Theorem 1.1}}

Let $\lambda \in (\lambda_k - \delta,\lambda_k)$, let
\begin{equation} \label{3.1}
0 < \eps < \min \set{\delta/(\lambda_k - \lambda) - 1,(\lambda_{k+m} - \lambda_{k+m-1})/\delta},
\end{equation}
and let $\eps_\lambda = \eps\, (\lambda_k - \lambda)$. Then $\lambda_{k+m-1} < \lambda_{k+m-1} + \eps_\lambda < \lambda_{k+m}$ and hence
\[
i(S^+ \setminus \Psi_{\lambda_{k+m-1} + \eps_\lambda}) = k + m - 1
\]
by Theorem \ref{Theorem 2.7} \ref{Theorem 2.7.iii}. Since $S^+ \setminus \Psi_{\lambda_{k+m-1} + \eps_\lambda}$ is an open symmetric subset of $S$, it then has a compact symmetric subset $A_0$ of index $k + m - 1$ (see the proof of Proposition 3.1 in Degiovanni and Lancelotti \cite{MR2371112}). We take $B_0 = \Psi_{\lambda_k} \cup (S \setminus S^+)$. Then
\[
S \setminus B_0 = S^+ \setminus \Psi_{\lambda_k}.
\]
Either $\lambda_1 = \cdots = \lambda_k$, or $\lambda_{l-1} < \lambda_l = \cdots = \lambda_k$ for some $2 \le l \le k$. In the former case,
\[
i(S \setminus B_0) = i(S^+ \setminus \Psi_{\lambda_1}) = i(\emptyset) = 0 \le k - 1
\]
by Theorem \ref{Theorem 2.7} \ref{Theorem 2.7.i}. In the latter case,
\[
i(S \setminus B_0) = i(S^+ \setminus \Psi_{\lambda_l}) = l - 1 \le k - 1
\]
by Theorem \ref{Theorem 2.7} \ref{Theorem 2.7.iii}.

Let $R > \rho > 0$ and let
\[
A = \set{Ru : u \in A_0}, \quad B = \set{\rho u : u \in B_0}, \quad X = \set{tu : u \in A_0,\, 0 \le t \le R}.
\]
For $t \ge 0$,
\begin{equation} \label{3.2}
E(tu) = \begin{cases}
\ds{\frac{t^p}{p} \left(1 - \frac{\lambda}{\Psi(u)}\right) - \frac{\mu\, t^q}{q} \int_{\R^N} b(x)\, |u|^q\, dx - \frac{t^{p_s^\ast}}{p_s^\ast} \int_{\R^N} |u|^{p_s^\ast}\, dx}, & u \in S^+\\[25pt]
\ds{\frac{t^p}{p} - \frac{\mu\, t^q}{q} \int_{\R^N} b(x)\, |u|^q\, dx - \frac{t^{p_s^\ast}}{p_s^\ast} \int_{\R^N} |u|^{p_s^\ast}\, dx}, & u \in S \setminus S^+.
\end{cases}
\end{equation}
Since $\int_{\R^N} |u|^{p_s^\ast}\, dx > 0$ on $S$, this integral is bounded away from zero on the compact set $A_0$, so \eqref{3.2} gives
\[
E(tu) \le \frac{t^p}{p} - c_1\, t^{p_s^\ast} \quad \forall u \in A_0
\]
for some constant $c_1 > 0$. So the first inequality in \eqref{2.9} holds if $R$ is sufficiently large. Since $S$ is bounded, \eqref{3.2} together with Lemma \ref{Lemma 2.1} gives
\[
E(tu) \ge \begin{cases}
\ds{\frac{t^p}{p} \left(1 - \frac{\lambda}{\lambda_k}\right) - c_2\, t^q - c_3\, t^{p_s^\ast}} & \forall u \in \Psi_{\lambda_k}\\[20pt]
\ds{\frac{t^p}{p} - c_2\, t^q - c_3\, t^{p_s^\ast}} & \forall u \in S \setminus S^+
\end{cases}
\]
for some constants $c_2, c_3 > 0$, so the second inequality in \eqref{2.9} holds if $\lambda < \lambda_k$ and $\rho$ is sufficiently small.

Any $u \in X$ can be written as $u = t \widetilde{u}$ for some $\widetilde{u} \in A_0$ and $t \in [0,R]$. Then
\[
I(u) = t^p\, I(\widetilde{u}) = t^p, \qquad J(u) = t^p\, J(\widetilde{u}) = \frac{I(u)}{\Psi(\widetilde{u})}.
\]
Since $A_0 \subset S^+ \setminus \Psi_{\lambda_{k+m-1} + \eps_\lambda}$ and hence $\Psi(\widetilde{u}) < \lambda_{k+m-1} + \eps_\lambda = \lambda_k + \eps_\lambda$, this gives
\[
I(u) \le (\lambda_k + \eps_\lambda)\, J(u),
\]
so
\[
E(u) \le \frac{1}{p}\, (\lambda_k + \eps_\lambda - \lambda)\, J(u) - \frac{1}{p_s^\ast} \int_{\R^N} |u|^{p_s^\ast}\, dx.
\]
Since $\eps_\lambda = \eps\, (\lambda_k - \lambda)$ and
\[
J(u) \le \pnorm[N/sp]{a} \pnorm[p_s^\ast]{u}^p
\]
by the H\"{o}lder inequality, this in turn gives
\[
E(u) \le \frac{1}{p}\, (1 + \eps)(\lambda_k - \lambda) \pnorm[N/sp]{a} \tau^p - \frac{1}{p_s^\ast}\, \tau^{p_s^\ast},
\]
where $\tau = \pnorm[p_s^\ast]{u}$. Maximizing the right-hand side over all $\tau \ge 0$ gives
\[
\sup_{u \in X}\, E(u) \le \frac{s}{N} \left[(1 + \eps)(\lambda_k - \lambda) \pnorm[N/sp]{a}\right]^{N/sp} < \frac{s}{N} \left[\delta \pnorm[N/sp]{a}\right]^{N/sp} = \frac{s}{N}\, S^{N/sp}
\]
by \eqref{3.1}. It now follows from Theorem \ref{Theorem 2.6} that the equation \eqref{1.1} has $m$ distinct pairs of solutions $\pm u^\lambda_j,\, j = 1,\dots,m$ such that
\begin{equation} \label{3.3}
0 < E(u^\lambda_j) \le \frac{s}{N} \left[(1 + \eps)(\lambda_k - \lambda) \pnorm[N/sp]{a}\right]^{N/sp}.
\end{equation}

Next we show that $u^\lambda_j \to 0$ in $D^{s,\,p}(\R^N)$ as $\lambda \nearrow \lambda_k$. By \eqref{3.3},
\begin{multline} \label{3.4}
E(u^\lambda_j) = \frac{1}{p} \int_{\R^{2N}} \frac{|u^\lambda_j(x) - u^\lambda_j(y)|^p}{|x - y|^{N+sp}}\, dx dy - \frac{\lambda}{p} \int_{\R^N} a(x)\, |u^\lambda_j|^p\, dx - \frac{\mu}{q} \int_{\R^N} b(x)\, |u^\lambda_j|^q\, dx\\[5pt]
- \frac{1}{p_s^\ast} \int_{\R^N} |u^\lambda_j|^{p_s^\ast}\, dx = \o(1)
\end{multline}
as $\lambda \nearrow \lambda_k$. Since $u^\lambda_j$ is a critical point of $E$,
\begin{multline} \label{3.5}
E'(u^\lambda_j)\, u^\lambda_j = \int_{\R^{2N}} \frac{|u^\lambda_j(x) - u^\lambda_j(y)|^p}{|x - y|^{N+sp}}\, dx dy - \lambda \int_{\R^N} a(x)\, |u^\lambda_j|^p\, dx - \mu \int_{\R^N} b(x)\, |u^\lambda_j|^q\, dx\\[5pt]
- \int_{\R^N} |u^\lambda_j|^{p_s^\ast}\, dx = 0.
\end{multline}
Dividing \eqref{3.5} by $p$ and subtracting from \eqref{3.4} gives
\[
\mu \left(\frac{1}{p} - \frac{1}{q}\right) \int_{\R^N} b(x)\, |u^\lambda_j|^q\, dx + \frac{s}{N} \int_{\R^N} |u^\lambda_j|^{p_s^\ast}\, dx = \o(1).
\]
Since
\[
\int_{\R^N} a(x)\, |u^\lambda_j|^p\, dx \le \pnorm[N/sp]{a} \left(\int_{\R^N} |u^\lambda_j|^{p_s^\ast}\, dx\right)^{p/p_s^\ast}
\]
by the H\"{o}lder inequality, the desired conclusion now follows from \eqref{3.4}.

Finally we use our regularity result (see Theorem \ref{Theorem 2.8}) to show that $u^\lambda_j \to 0$ in $L^\infty(\R^N)$ as $\lambda \nearrow \lambda_k$. Note that $w=u_{\lambda}$ solves the following fractional $p$-Laplacian equation
\begin{equation}\label{Eq-13}
(-\Delta_p)^s w=f_{\lambda}(x,w),
\end{equation}
where 
$$f_{\lambda}(x,s)=\left( \lambda\, a(x) + \mu\, b(x)\, |u_{\lambda}(x)|^{q - p}+ |u_{\lambda}(x)|^{p^{*} - p}\right) |s|^{p-2}s.
$$ 
In addition,
$$|f_{\lambda}(x,s)|\leq b_{\lambda, 2}|s|^{p-1}=(m_{\lambda}(x)+n_{\lambda}(x))|s|^{p-1},$$
where
$$m_{\lambda}(x):=\lambda_k\, a(x) + \mu\, b(x)\, |u_{\lambda}(x)|^{q - p}\, \quad\mbox{ and }\quad n_{\lambda}(x):=|u_{\lambda}(x)|^{p^{*} - p}.
$$
In order to apply Theorem \ref{Theorem 2.8}, we need to verify hypothesis (H1). As $p<q<p^{*}$, we have 
$\frac{N}{sp}(q-p)<p^{*}=\frac{Np}{N-sp}$, from which follows the existence of $t_0>\frac{N}{sp}$ satisfying $t_0(q-p)<p^{*}$. Hence the fact that $\|u_{\lambda} \|_{p^{*}}\rightarrow 0$ as $\lambda \rightarrow \lambda_k$ allows for the estimate
$$ \|m_{\lambda}\|_{t_0}\leq \lambda_k\|a\| +\mu \|b\|\|u_{\lambda}\|_{p^{*}}^{q-p}\leq C $$
for some $C$ independent of $\lambda$, with $\|a\|=\|a\| _{1}+\|a\| _{\infty}$ and $\|b\|=\|b\| _{1}+\|b\| _{\infty}$.
On the other hand,
$$
\|n_{\lambda}\|_{\frac{N}{ps}}=\|u_{\lambda}\|_{p^{*}}^{\frac{p^2s}{N-ps}}.
$$ 
So (H1) is valid and since $\|n_{\lambda}\|_{\frac{N}{ps}}\rightarrow 0$ as $\lambda \rightarrow \lambda_k$, an application of Theorem \ref{Theorem 2.8} yields
$$
\|u_{\lambda}\|_{r}\leq C \max\left\{\|u_{\lambda}\|_{\beta}, \|u_{\lambda}\|_{\beta}^{\theta_0}\right\}\rightarrow 0,\, \quad \mbox{ as }\, \quad \lambda \rightarrow \lambda_k
$$
for any $r\in [p^{*},\infty)$. But this implies that, apostriori, $n_{\lambda}=|u_{\lambda}^{p^{*}-p}|\in L^{t}(\R^N)$ for all $t\in [\frac{N}{ps},\infty)$, and, moreover, $\|u_{\lambda}\|_{p^{*}+\delta}\rightarrow 0$ for any $\delta>0$. Therefore the growth on the right-hand side of equation (\ref{Eq-13}) can be recast as
$$
|f_{\lambda}(x,s)|=\left( \lambda\, a(x) + \mu\, b(x)\, |u_{\lambda}(x)|^{q - p}+ |u_{\lambda}(x)|^{p^{*} - p}\right) |s|^{p-1}\leq \tilde{m}_{\lambda}(x)|s|^{p-1},
$$ 
where 
$$
\tilde{m}_{\lambda}(x):=\lambda_k\, a(x) + \mu\, b(x)\, |u_{\lambda}(x)|^{q - p}+ |u_{\lambda}(x)|^{p^{*} - p}.
$$
Note that, with $t_0>\frac{N}{ps}$ chosen above, writing $(p^{*}-p)t_0=p^{*}+\delta$, we have
$$\|\tilde{m}_{\lambda}\|_{t_0}\leq \lambda_k\|a\| +\mu \|b\|\|u_{\lambda}\|_{p^{*}}^{q-p}+\|u_{\lambda}\|_{p^{*}+\delta}^{\frac{P^{*}+\delta}{p^{*}-p}}\leq C
$$
for some $C$ independent of $\lambda$. Finally, a further application of Theorem \ref{Theorem 2.8} in the case of $n(x)=0$ yields
$$
\|u_{\lambda}\|_{\infty}\leq C \max\left\{\|u_{\lambda}\|_{p^{*}}, \|u_{\lambda}\|_{p^{*}}^{\theta_0}\right\}\rightarrow 0,\, \quad \mbox{ as }\, \quad \lambda \rightarrow \lambda_k,
$$
completing the proof.

\subsection{Proof of Theorem \ref{Theorem 1.3}}

Set $\lambda_0 = 0$. Then $\lambda_{k-1} \le \lambda < \lambda_k$ for some $k \ge 1$. We take $B_0 = \Psi_{\lambda_k} \cup (S \setminus S^+)$. Then
\[
i(S \setminus B_0) = i(S^+ \setminus \Psi_{\lambda_k}) = k - 1
\]
by Theorem \ref{Theorem 2.7} \ref{Theorem 2.7.i} \& \ref{Theorem 2.7.iii}. By taking $m$ larger if necessary, we may assume that $\lambda_{k+m-1} < \lambda_{k+m}$. Then
\[
i(S^+ \setminus \Psi_{\lambda_{k+m}}) = k + m - 1
\]
by Theorem \ref{Theorem 2.7} \ref{Theorem 2.7.iii}. Since $S^+ \setminus \Psi_{\lambda_{k+m}}$ is an open symmetric subset of $S$, it then has a compact symmetric subset $A_0$ of index $k + m - 1$ (see the proof of Proposition 3.1 in Degiovanni and Lancelotti \cite{MR2371112}).

Let $R > \rho > 0$ and let
\[
A = \set{Ru : u \in A_0}, \quad B = \set{\rho u : u \in B_0}, \quad X = \set{tu : u \in A_0,\, 0 \le t \le R}.
\]
As in the proof of Theorem \ref{Theorem 1.1}, the first inequality in \eqref{2.9} holds if $R$ is sufficiently large and the second inequality holds if $\rho$ is sufficiently small. Any $u \in X$ can be written as $u = t \widetilde{u}$ for some $\widetilde{u} \in A_0$ and $t \in [0,R]$. Then
\[
I(u) = t^p\, I(\widetilde{u}) = t^p, \qquad J(u) = t^p\, J(\widetilde{u}) = \frac{I(u)}{\Psi(\widetilde{u})}.
\]
Since $A_0 \subset S^+ \setminus \Psi_{\lambda_{k+m}}$ and hence $\Psi(\widetilde{u}) < \lambda_{k+m}$, this gives
\[
I(u) \le \lambda_{k+m}\, J(u),
\]
so
\[
E(u) \le \frac{1}{p}\, (\lambda_{k+m} - \lambda)\, J(u) - \frac{\mu}{q} \int_{\R^N} b(x)\, |u|^q\, dx.
\]
Since
\[
J(u) \le \pnorm[q/(q-p)]{\frac{a}{b^{p/q}}} \pnorm[q,b]{u}^p
\]
by the H\"{o}lder inequality (see \eqref{2.1}), this in turn gives
\[
E(u) \le \frac{1}{p}\, (\lambda_{k+m} - \lambda) \pnorm[q/(q-p)]{\frac{a}{b^{p/q}}} \tau^p - \frac{\mu}{q}\, \tau^q,
\]
where $\tau = \pnorm[q,b]{u}$. Maximizing the right-hand side over all $\tau \ge 0$ gives
\[
\sup_{u \in X}\, E(u) \le \left(\frac{1}{p} - \frac{1}{q}\right) \left[(\lambda_{k+m} - \lambda) \pnorm[q/(q-p)]{\frac{a}{b^{p/q}}}\right]^{q/(q-p)} \mu^{-p/(q-p)}.
\]
If $\mu > 0$ is sufficiently large, the last expression is less than $\dfrac{s}{N}\, S^{N/sp}$ and hence it follows from Theorem \ref{Theorem 2.6} that the equation \eqref{1.1} has $m$ distinct pairs of solutions $\pm u^\mu_j,\, j = 1,\dots,m$ such that
\[
0 < E(u^\mu_j) \le \left(\frac{1}{p} - \frac{1}{q}\right) \left[(\lambda_{k+m} - \lambda) \pnorm[q/(q-p)]{\frac{a}{b^{p/q}}}\right]^{q/(q-p)} \mu^{-p/(q-p)}.
\]
An argument similar to that in the proof of Theorem \ref{Theorem 1.1} shows that $u^\mu_j \to 0$ in $D^{s,\,p}(\R^N)$ as $\mu \nearrow \infty$.

\subsection{Proof of Corollary \ref{Corollary 1.4}}

Assume $\frac{\alpha}{\beta}>\frac{p}{q}$, then by using spherical coordinates and taking into account that $q>p$ we get the estimate
\begin{eqnarray*}
\int_{\R^N}\left(\frac{a}{b^{p/q}}\right)^{\frac{q}{q-p}}dx
&=&\int_{\R^N}\left(\frac{\left(1+|x|^{N+\beta}\right)^{\frac{p}{q}}}{1+|x|^{N+\alpha}}\right)^{\frac{q}{q-p}}dx\\
&=&\int_{B(0,1)} \frac{ \left(1+|x|^{N+\beta}\right)^{\frac{p}{q-p}}}{ \left(1+|x|^{N+\alpha}\right)^{\frac{q}{q-p}}}dx
+\int_{\R^N\setminus B(0,1)} \frac{ \left(1+|x|^{N+\beta}\right)^{\frac{p}{q-p}}}{ \left(1+|x|^{N+\alpha}\right)^{\frac{q}{q-p}}}dx\\
&\le &1+\int_{\R^N\setminus B(0,1)} \frac{ \left(1+|x|^{N+\beta}\right)^{\frac{p}{q-p}}}{ \left(1+|x|^{N+\alpha}\right)^{\frac{q}{q-p}}}dx\\
&\le &1+\int_1^{\infty}\varrho^{(N+\beta) \frac{p}{q-p}-( N+\alpha)\frac{q}{q-p}+N-1 }d\varrho<\infty,
\end{eqnarray*}
since 
$$
(N+\beta) \frac{p}{q-p}-( N+\alpha)\frac{q}{q-p}+N=\frac{1}{q-p}(\beta p-\alpha q)<0,
$$
and thus $\frac{a}{b^{p/q}} \in L^{q/(q-p)}(\R^N)$.

\hspace{1pt}

{\bf Data availability statement:} This manuscript does not use any data.

\hspace{1pt}

{\bf Conflict of interest statement:} The authors declare that there are no conflicts of interest.

\def\cprime{$''$}

\end{document}